\documentclass[12pt,a4paper]{amsart}
\usepackage{latexsym}
\usepackage[latin1]{inputenc}
\usepackage{graphicx}
\usepackage{amssymb,amsmath,amsfonts}
\usepackage{enumerate}
\usepackage{amsthm,upgreek}

\usepackage{srcltx}

\theoremstyle{definition}
\newtheorem{definition}{Definition}[section]

\theoremstyle{plain}
\newtheorem{theorem}[definition]{Theorem}
\newtheorem{proposition}[definition]{Proposition}
\newtheorem{corollary}[definition]{Corollary}
\newtheorem{lemma}[definition]{Lemma}
\newtheorem*{theorem*}{Theorem}

\theoremstyle{remark}

\newtheorem{remark}[definition]{Remark}

\newcommand{\D}{\mathcal{D}}
\newcommand{\C}{\mathbb{C}}

\newcommand{\N}{\mathbb{N}}
\newcommand{\Id}{\mathrm{Id}}
\newcommand{\re}{\mathrm{Re}}
\newcommand{\im}{\mathrm{Im}}
\newcommand{\vol}{\mathrm{vol}}
\renewcommand{\L}{\mathcal{L}}

\newcommand{\Mn}{M_\epsilon^n}

\newcommand{\func}[5]{\ensuremath{\begin{array}{cccl}
#1:&#2&\longrightarrow&#3\\&#4&\mapsto&#5\end{array}}}

\title[Crofton formulas in complex space forms]{The Gauss-Bonnet theorem and Crofton type formulas in complex space forms}
\author{Judit Abardia} 
\address{Institut f\"ur Mathematik, Johann Wolfgang Goethe-Universit\"at Frankfurt, 
Robert-Mayer-Str. 10, 60054 Frankfurt, Germany}
\email{abardia@math.uni-frankfurt.de}
\author{Eduardo Gallego} 
\author{Gil Solanes}
\address{Departament de Matem{\`a}tiques, Facultat de Ci{\`e}ncies,
Universitat Aut{\`o}noma de Barcelona, 08193--Bellaterra (Barcelona), Spain}
\email{egallego@mat.uab.cat, solanes@mat.uab.cat}

\thanks{Work partially supported by FEDER/MEC grant MTM2006-04353, the Schweizerischer Nationalfonds grants PP002-114715, the Ram\'on y Cajal program, the DIUR from the Generalitat de Catalunya and the ESF}
\subjclass{Primary 53C65; Secondary 52A22, 53C55}
\keywords{Complex space forms, Gauss-Bonnet formula, Crofton formulas, Valuations}

\begin{document}

\maketitle
\begin{abstract}We compute the measure with multiplicity of the set of complex planes intersecting a compact domain in a complex space form. The result is given in terms of the so-called hermitian intrinsic volumes. Moreover, we obtain two different versions for the Gauss-Bonnet-Chern formula in complex space forms. One of them gives the Gauss curvature integral in terms of the Euler characteristic, and some hermitian intrinsic volumes. The other one, which is shorter, involves the measure of complex hyperplanes meeting the domain. As a tool, we obtain variation formulas in integral geometry of complex space forms. 
\end{abstract}

\section{Introduction} 

In this paper we obtain some basic formulas in the integral geometry of complex space forms. In euclidean space, integral geometry was initiated by Blaschke and his school (cf. \cite{blaschke}). The extension to real space forms is mainly due to Santal\'o (cf. \cite{santalo.primera}). Recently, Alesker (cf. \cite{alesker.HardLefschetz}),  Bernig and Fu (cf. \cite{bernig.fu}) have made significant progress  in the integral geometry of standard hermitian space. Previously, Park (cf. \cite{park}) studied successfully the case of complex space forms in dimensions $2$ and $3$. Here we begin the generalization to higher dimensions.

Let us recall Crofton's formula in the real space form $N_\epsilon^n$ of sectional curvature $\epsilon$ which reads as follows  (cf. \cite[(17.58) and (17.59)]{santalo.primera})
\begin{equation}\label{CroftonReal}
 \int_{\mathcal L_r}\chi(\Omega\cap L_r)dL_r=\sum_{j=0}^{[r/2]}c_j\epsilon^j\mu_{n-r+2j}(\Omega)
\end{equation}
where $\L_{r}$ is the space of $r$-dimensional totally geodesic planes endowed with a measure $dL_{r}$ invariant under the group of isometries. Here $\Omega$ belongs to some suitable class of  subsets of $N_\epsilon^n$. For instance, the class of compact domains with smooth boundary. The funcionals $\mu_j$ are the so-called {\em intrinsic volumes} appearing in the Steiner formula for the volume of parallel sets (cf. \cite{santalo.primera,gao.hug.schneider}). When $\partial \Omega$ is smooth, $\mu_j(\Omega)$ is a multiple of the $(n-j-1)$-th mean curvature integral of $\partial \Omega$. The coefficients $c_{j}$ are known and depend only on $n$, $r$ and $j$. 

\smallskip
Our main result is a generalization of \eqref{CroftonReal} to the complex space forms $\Mn$, i.e. the  $n$-dimensional complete simply connected K\"ahler manifolds of constant holomorphic curvature $4\epsilon$. 
For $\epsilon>0$ (resp. $\epsilon<0$),  $\Mn$ is isometric to the complex projective (resp. complex hyperbolic) space endowed with the Fubini-Study metric (resp. Bergmann metric) suitably rescaled, while $M_0^n$ is the standard hermitian space $\C^n$. 

Let $\L^\C_r$ denote the space of totally geodesic complex submanifolds of $\Mn$ of complex dimension $r$ (complex $r$-planes). This is a homogeneous space under the isometry group of $\Mn$, and admits an invariant measure $dL_r$ which is unique up to normalization. Here we take the normalization coming from the Maurer-Cartan forms (see Lemma \ref{lema.densitat}).  Our main goal is to determine
\begin{equation}\label{defval}
 \phi_{n-r}(\Omega):=\int_{\L_r^\C}\chi(\Omega\cap L_r)dL_r,\qquad r=1,\cdots, n-1,
\end{equation}
in terms of the geometry of $\Omega\subset \Mn$. Again there are different choices for the class of subsets of $\Mn$ where $\Omega$ is taken. Here we concentrate mainly on $\mathcal P(\Mn)$, the class of compact submanifolds with corners (cf. \cite{alesker.bernig}). In particular this class contains the compact domains with smooth boundary. 

Our results are better described in the language of valuations. In vector spaces, valuations are defined as finitely additive functionals on convex sets. Hadwiger's characterization theorem of continuous invariant valuations allowed an axiomatic approach to integral geometry in euclidean space. Recently, a theory of valuations on manifolds has been developed by Alesker (cf. \cite{alesker1,alesker2,alesker3,alesker4}).  

Valuations on manifolds (also called smooth valuations) are defined  as functionals on the class of compact submanifolds with corners fulfilling some additivity and smoothness conditions.  For simplicity we use the following characterization. Let $X$ be an $n$-dimensional riemannian manifold. Given a submanifold with corners $\Omega\in \mathcal P(X)$, we consider the so-called unit normal cycle $N(\Omega)$ which is a Lipschitz submanifold of the unit tangent bundle $S(X)$. If $\partial \Omega$ is smooth, $N(\Omega)$ is just the outer unit normal bundle of $\partial \Omega$.  A valuation $\mu$ on $X$ is obtained  by integration of a smooth measure $\eta$ of $X$, and an $(n-1)$-form $\omega$ of $S(X)$ as follows 
 $$\mu(\Omega)=\int_{\Omega}\eta + \int_{N(\Omega)}\omega.$$ 
For instance, the intrinsic volumes $\mu_j$ appearing in \eqref{CroftonReal} are of this form. Also the Euler characteristic $\chi$ is a valuation in this sense, as shown by Chern's proof of the Gauss-Bonnet theorem \cite{chern.curvatura.integra}. Furthermore $\phi_{n-r}$ is a valuation. For $\epsilon>0$ this follows by the results in \cite{fu.indiana}. For $\epsilon=0$, it can be shown by the same results using a limit procedure. The case $\epsilon<0$ will follow from Theorem \ref{mesuresCKn} in this paper.

\smallskip
If the isometry group $G$ of $X$ acts transitively on $S(X)$, then the vector space of $G$-invariant valuations is finite dimensional (cf. \cite{fu.indiana}). In hermitian space $\C^n$, several bases of $U(n)$-invariant valuations have been obtained (cf. \cite{alesker.HardLefschetz,bernig.fu}). One of these bases is composed of the so-called {\em hermitian intrinsic volumes} $\{\mu_{k,q}\}_{k,q}$ (cf. section 2 for the definition). These valuations $\mu_{k,q}$ generalize in a natural way to complex space forms $\Mn$. As a consequence (at least for $\epsilon\geq 0$), the valuations $\phi_{j}$ and also $\chi$ are linear combinations of the hermitian intrinsic volumes $\mu_{k,q}$. Our main result gives these combinations explicitly.

\begin{theorem*}\label{teorema}In $\Mn$, the following equality between valuations holds for $r=1,\dots,n-1$
\begin{align}\label{Crofton}\notag\phi_{n-r}=\frac{\vol(G_{n-1,r}^{\C})}{\binom{n-1}{r}}&\left(\sum_{j=n-r}^{n}\epsilon^{j-(n-r)}\omega_{2n-2j}\binom{n}{j}^{-1}\!\!\!\cdot\huge((j+r-n+1)\mu_{2j,j}+\right.\\&\qquad+\left.\sum_{q=\max\{0,2j-n\}}^{j-1}\frac{1}{4^{j-q}}\binom{2j-2q}{j-q}\mu_{2j,q}\huge)\right),
\end{align} 
where $\omega_{i}$ denotes the $i$-dimensional volume of the euclidean unit ball, and $G_{n-1,r}^\C$ is the grassmannian of complex linear $r$-planes in $\C^{n-1}$. Moreover,
\begin{equation}\label{gbIntro}\chi=\sum_{c=0}^{n}\epsilon^c\frac{\omega_{2n-2c}}{\binom{n}{c}\omega_{2n}}\!\!\left(\sum_{q=\max\{0,2c-n\}}^{c-1}\!\!\frac{1}{4^{c-q}}\binom{2c-2q}{c-q}\mu_{2c,q}\!+\!(c+1)\mu_{2c,c}\!\right)\!\!.\end{equation}
\end{theorem*}

Using the results in \cite{bernig.fu} one can prove formula \eqref{Crofton} for $\epsilon=0$. Here we give an alternative proof, and we extend the result to $\epsilon\neq 0$.  In case $r=1$, equality (\ref{Crofton}) has already been proved by different methods in \cite{abardia.m1}. We would like to remark that formula \eqref{Crofton} answers a question about convex sets of $\mathbb C^n$ posed by Naveira  in \cite{naveira}.

Equality (\ref{gbIntro}) is a version of   the Gauss-Bonnet theorem in complex space forms. There seems to be no direct way to prove such an explicit formula from the intrinsic Gauss-Bonnet theorem of  \cite{chern.curvatura.integra}. In complex dimensions $n=2,3$, formula \eqref{gbIntro} was obtained in \cite{park}.

Combining expressions (\ref{Crofton}) and (\ref{gbIntro}) we obtain 
\begin{align*}\omega_{2n}\chi&=\epsilon\phi_1+\sum_{k=0}^{n}\epsilon^k \omega_{2n-2k}\binom{n}{k}^{-1}\mu_{2k,k}.\end{align*}

This expression is similar to the following one for real space forms $N_\epsilon^n$ (cf. \cite{solanes.gb}) 
$$\omega_{n-1}\chi=2n\epsilon{\omega_n}\,\varphi_2+{\omega_{n-1}}\mu_0,$$
where $\varphi_2$ is the valuation given by the left hand side of \eqref{CroftonReal} for $r=n-2$.

The main idea for the proof of the theorem above is to compare the first variations of both sides of equalities (\ref{Crofton}) and (\ref{gbIntro}). Indeed, valuations are determined by their first variation up to scalar multiples of $\chi$.

In order to obtain the variation of $\phi_{j}$ we proceed as in \cite{solanes.gb} (see Section \ref{variation1}). The variation of the hermitian intrinsic volumes $\mu_{k,q}$ was obtained by Bernig and Fu in \cite{bernig.fu} in case $\epsilon=0$. Here we use the same method to find this variation for $\epsilon\neq 0$ (see Section \ref{variation2}).

\section*{Acknowledgments}We wish to thank Andreas Bernig and Joseph Fu for illuminating discussions during the preparation of this work.

\section{Hermitian intrinsic volumes}\label{two}
Let $\Mn$ be a (simply connected) complex space form with constant holomorphic curvature $4\epsilon$. We denote by $\langle\ ,\ \rangle$ the riemannian metric, and by $S(\Mn)$ the unit tangent bundle of $\Mn$.

Recall that $\mathcal P(\Mn)$ denotes the class of compact submanifolds with corners of $\Mn$ (cf. \cite{alesker.bernig}). For technical reasons we also consider the following class.

\begin{definition}
 We denote by $\mathcal R(\Mn)$ the class of compact subsets $\Omega\subset\Mn$ such that the boundary $\partial \Omega$ is a $C^{1,1}$ regular hypersurface (i.e. it is locally the graph of a $C^1$ function with Lipschitz gradient), which is moreover $C^\infty$ on an open full-measure subset.
\end{definition}

In particular, the parallel sets (at small distances) of an element of $\mathcal P(\Mn)$ belong to $\mathcal R(\Mn)$. Both classes $\mathcal P(\Mn)$, and $\mathcal R(\Mn)$ contain the compact domains with smooth boundary, and are contained in the class of sets of positive reach. The latter is guaranteed by the $C^{1,1}$ condition on $\mathcal R(\Mn)$ (cf. \cite[Lemma 4.11]{federer}).  The smoothness condition is needed in subsection \ref{variation2}.

\begin{definition}Let $\Omega\subset \Mn$ belong to the  class  $\mathcal P(\Mn)$ or $\mathcal R(\Mn)$. The  {\em normal cycle} of $\Omega$ is defined as 
\begin{align*}
N(\Omega):=\{(p,v)&\in S(\Mn)\ |\ \langle v,x'(0)\rangle\leq 0,\\ &\forall x:[0,1)\rightarrow \Omega \mbox{ smooth with } x(0)=p \}.
\end{align*}
This is a Lipschitz submanifold of $S(\Mn)$. Hence it makes sense to integrate $(2n-1)$-differential forms on it. Thus, $N(\Omega)$ defines a current. This current is a cycle in the sense that vanishes on exact forms (cf. for instance the survey \cite[Prop. 29]{thaele}).

Given a $(2n-1)$-form  $\omega$ in $S(\Mn)$, and a smooth measure $\eta$ we may consider, for every  $\Omega\in \mathcal P(\Mn)$ or $\mathcal R(\Mn)$ $$\int_{\Omega}\eta + \int_{N(\Omega)}\omega.$$ The resulting functionals on $\mathcal{P}
(\Mn)$ and on $\mathcal R(\Mn)$ are called \emph{smooth valuations}.
\end{definition}

Let $(z,e_{1})\in S(\Mn)$ and let $\{z;e_{1},\dots,e_{n}\}$ be a moving frame defined on an open neighborhood  $U\subset S(\Mn)$. We denote by $\{\omega_{1},\omega_{2},\dots,\omega_{n}\}$ the 1-forms in $S(\Mn)$ defined as the dual basis of $\{e_{1},\dots,e_{n}\}$, and by $\{\omega_{ij}\}$ the corresponding connection forms of $\Mn$. That is, if $(\,,)$ denotes the Hermitian product on $\Mn$ and $\nabla$ the Levi-Civita connection, then \begin{align}\label{formes1}\omega_{j}=(dz,e_{j})\quad\text{and}\quad\omega_{jk}=(\nabla e_{j},e_{k})\quad\text{where}\quad j,k\in\{1,\dots,n\}.\end{align} Thus, these forms are $\C$-valued. We denote
\begin{align}\label{newLabel}\omega_{j}&=\alpha_{j}+i\beta_{j},\\
\omega_{jk}&=\alpha_{jk}+i\beta_{jk}.\nonumber\end{align}
Forms $\alpha_{1}$, $\beta_{1}$ and $\beta_{11}$ are global forms in $S(\Mn)$. We denote them by $\alpha$, $\beta$, $\gamma$ respectively.
Note that $\alpha$ coincides with the contact form of the unit tangent bundle $S(\Mn)$.

The following lemma is a well-known property of the normal cycle.
\begin{lemma}\label{anulen}Let $\Omega\subset \Mn$ belong to the  class  $\mathcal P(\Mn)$ or $\mathcal R(\Mn)$. Then $\alpha$ and $d\alpha$ vanish (almost everywhere) on $N(\Omega)\subset S(\Mn)$ (i.e. $N(\Omega)$ is Legendrian).
\end{lemma}
\begin{proof}Let $(p,v)\in N(\Omega)$ be an element with tangent space $T_{(p,v)}N(\Omega)$, and let $V\in T_{(p,v)}N(\Omega)$. Then, $\alpha(V)_{(p,v)}=\langle d\pi(V),v\rangle=0$ where $\pi:S(\Mn)\rightarrow \Mn$ is the canonical projection (cf. \cite{federer}). Hence, $\alpha$ vanishes on an open full-measure subset $U\subset N(\Omega)$. Trivially, $d\alpha$ vanishes on $U$.
\end{proof}

Consider the following invariant 2-forms in $S(\Mn)$ 
\begin{align}\label{defRefMob}\nonumber
\theta_{0}&=-\im((\nabla e_{1},\nabla e_{1}))=\sum_{i=2}^{n}\alpha_{1i}\wedge\beta_{1i}\\
\theta_{1}&=-\im((dz,\nabla e_{1})-(\nabla e_{1},dz))=\sum_{i=2}^{n}(\alpha_{i}\wedge\beta_{1i}-\beta_{i}\wedge\alpha_{1i})\\
\nonumber\theta_{2}&=-\im((dz,dz))=\sum_{i=2}^{n}\alpha_{i}\wedge\beta_{i}\\
\nonumber\theta_{s}&=\re((dz,\nabla e_{1})-(\nabla e_{1},dz))=\sum_{i=2}^{n}(\alpha_{i}\wedge\alpha_{1i}+\beta_{i}\wedge\beta_{1i}).
\end{align}

\begin{remark}These forms coincide with the invariant $2$-forms, $\theta_{0}$, $\theta_{1}$, $\theta_{2}$ and $\theta_{s}$ defined in $S(\C^n)$ by Bernig and Fu \cite{bernig.fu}. Note that, $\theta_{s}$ is the symplectic form of $T\Mn$. 
Park considered in \cite{park} similar 2-forms in $S(\Mn)$. Furthermore, it was shown there that $\alpha,\beta,\gamma,\theta_0,\theta_1,\theta_2,\theta_s$ generate the algebra of invariant forms of $S(\Mn)$.\end{remark}

Next we recall the exterior derivative of these forms, which can be found in \cite{bernig.fu} when $\epsilon=0$, or in \cite{park} for general $\epsilon$. 
\begin{lemma}[\cite{park}]\label{diferencials}In $S(\Mn)$ it is satisfied
$$\begin{array}{ll}
d\alpha=-\theta_{s}, & d\theta_{0}=-\epsilon(\alpha\wedge\theta_{1}+\beta\wedge\theta_{s}),\\
d\beta=\theta_{1}, & d\theta_{1}=0,\\
d\gamma=2\theta_{0}-2\epsilon\theta_{2}-2\epsilon\alpha\wedge\beta,\,\quad & d\theta_{2}=0.
\end{array}$$
\end{lemma}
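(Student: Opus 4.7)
My plan is to derive all six identities by direct application of Cartan's first and second structure equations to the Hermitian Levi-Civita connection of $\CK^n(\epsilon)$. With the conventions $\omega_j=(dz,e_j)$ and $\omega_{jk}=(\nabla e_j, e_k)$ the Hermitian character of the connection forces $\omega_{kj}=-\overline{\omega_{jk}}$; in particular $\omega_{11}=i\gamma$ is purely imaginary. The structure equations read
$$d\omega_j=-\sum_k \omega_{kj}\wedge\omega_k, \qquad d\omega_{jk}=-\sum_l \omega_{lj}\wedge\omega_{kl}+\Omega_{jk},$$
and for constant holomorphic curvature $4\epsilon$ the curvature 2-forms take the well-known form
$$\Omega_{jk}=-\epsilon\bigl(\omega_j\wedge\overline{\omega_k}+\delta_{jk}\sum_l \omega_l\wedge\overline{\omega_l}\bigr).$$

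I would first treat $d\alpha$ and $d\beta$. Writing $\omega_1=\alpha+i\beta$ and decomposing the first structure equation into real and imaginary parts gives two 2-forms, which after using $\omega_{j1}=-\overline{\omega_{1j}}$ and relabeling indices are identified with $-\theta_s$ and $\theta_1$ respectively. Next, for $d\gamma$ I would apply the second structure equation to $\omega_{11}=i\gamma$: the connection sum $-\sum_l \omega_{l1}\wedge\omega_{1l}$ reduces via $\omega_{l1}=-\overline{\omega_{1l}}$ to a purely imaginary form whose imaginary part is (up to the factor of two built into the definition of $\theta_0$) equal to $2\theta_0$, while the curvature contribution $\Omega_{11}=-\epsilon(\omega_1\wedge\overline{\omega_1}+\sum_l \omega_l\wedge\overline{\omega_l})$ produces precisely the $-2\epsilon\alpha\wedge\beta$ and $-2\epsilon\theta_2$ terms upon taking imaginary parts.

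For $d\theta_0$, $d\theta_1$, $d\theta_2$, I would differentiate each defining expression and substitute the structure equations. For $\theta_1$ and $\theta_2$ only the first structure equation enters; the resulting cubic wedges of 1-forms cancel in pairs under the swap of dummy indices, confirming $d\theta_1=d\theta_2=0$. For $\theta_0$ the second structure equation is required; the cubic connection terms cancel by the same symmetry argument as in the flat case \cite{bernig.fu}, while the curvature terms $\Omega_{1i}$ wedge-contracted with $\overline{\omega_{1i}}$ (together with their conjugates) recombine, after taking imaginary parts, into exactly $-\epsilon(\alpha\wedge\theta_1+\beta\wedge\theta_s)$.

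The main obstacle is not conceptual but bookkeeping: one must consistently convert between $\im\bigl(\omega\otimes\overline{\omega}\bigr)$ in the sesquilinear-form convention used to define the $\theta_i$ and the associated real 2-form, propagate the sign carefully through every conjugation, and correctly isolate the diagonal $j=k$ contributions to $\Omega_{jk}$, which are the source of the $\epsilon\theta_2$ and $\epsilon\alpha\wedge\beta$ corrections to the corresponding Euclidean formulas of \cite{bernig.fu}.
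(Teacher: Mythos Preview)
The paper does not supply its own proof of this lemma: it simply quotes the result from \cite{park} (and from \cite{bernig.fu} in the flat case). Your proposal---computing each exterior derivative from Cartan's first and second structure equations together with the explicit curvature forms $\Omega_{jk}$ of a space of constant holomorphic curvature $4\epsilon$---is exactly the standard route, and is presumably what is carried out in those references. So there is nothing to compare against beyond noting that your method is the natural one.

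One small caution: the precise shape of the structure equations depends on the convention $\omega_{jk}=(\nabla e_j,e_k)$ adopted in \eqref{formes1}, which makes $\omega_{kj}=-\overline{\omega_{jk}}$ but may flip a sign relative to the version you wrote (e.g.\ $d\omega_{jk}=-\sum_l\omega_{jl}\wedge\omega_{lk}+\Omega_{jk}$ versus your $-\sum_l\omega_{lj}\wedge\omega_{kl}+\Omega_{jk}$). Since, as you say, the argument is entirely bookkeeping, it is worth fixing this convention explicitly at the outset and checking it against the flat case $d\alpha=-\theta_s$, $d\beta=\theta_1$ before tackling the curvature-dependent identities.
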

\begin{proof}
This follows form the structure equations on $\Mn$, obtained by differentiating \eqref{formes1}.
\end{proof}

The following definition already appears in \cite{bernig.fu} when $\epsilon=0$.

\begin{definition}For positive integers $k,q\in\N$ with $\max\{0,k-n\}\leq q \leq\frac{k}{2}< n$, we define the following $(2n-1)$-forms in $S(\Mn)$
$$\beta_{k,q}:=c_{n,k,q}\beta\wedge\theta_{0}^{n-k+q}\wedge\theta_{1}^{k-2q-1}\wedge\theta_{2}^q\in \Upomega^{2n-1}(S(\Mn)),\quad \text{ if }k\neq 2q$$
$$\gamma_{k,q}:=\dfrac{c_{n,k,q}}{2}\gamma\wedge\theta_{0}^{n-k+q-1}\wedge\theta_{1}^{k-2q}\wedge\theta_{2}^q\in \Upomega^{2n-1}(S(\Mn)),\quad \text{ if }n\neq k-q$$
where
$$c_{n,k,q}:=\frac{1}{q!(n-k+q)!(k-2q)!\omega_{2n-k}}$$ and $\omega_{2n-k}$ denotes the volume of the $(2n-k)$-dimensional euclidean ball.
\end{definition}
These forms consitute a basis of the vector space of  isometry invariant $(2n-1)$-forms of $S(\Mn)$ modulo the subspace of multiples of $\alpha,d\alpha$.

For $\max\{0,k-n\}\leq q\leq \frac k 2 <n$ let us consider the valuations $\mu_{k,q}^\beta,\mu_{k,q}^\gamma$ given by
$$\mu_{k,q}^\beta(\Omega):=\int_{N(\Omega)}\beta_{k,q}\quad  (\mbox{if }k\neq 2q),$$$$\quad\mu^\gamma_{k,q}(\Omega):=\int_{N(\Omega)}\gamma_{k,q}\quad (\mbox{if }n\neq k-q).$$

In $\C^n$, it turns out that $\mu^\beta_{k,q}=\mu^\gamma_{k,q}$ (cf. \cite{bernig.fu}). 
Next, we give an analogous relation among $\{\mu^{\beta}_{k,q}\}$ and $\{\mu^{\gamma}_{k,q}\}$ in $\Mn$.

\begin{proposition}\label{relacioGammaB}In $\Mn$, for any positive integers $k,q$ such that $\max\{0,k-n\}< q< k/2< n$ it is satisfied $$\mu^{\gamma}_{k,q}=\mu^{\beta}_{k,q}-\epsilon\frac{ c_{n,k,q}}{c_{n,k+2,q+1}}\mu^{\beta}_{k+2,q+1}.$$
\end{proposition}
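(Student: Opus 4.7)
The plan is to exhibit a $(2n-2)$-form $\phi$ on $S(\CK^n(\epsilon))$ whose exterior derivative, when pulled back to $N(\Omega)$, equals exactly
$$\gamma_{k,q}-\beta_{k,q}+\epsilon\,\frac{c_{n,k,q}}{c_{n,k+2,q+1}}\,\beta_{k+2,q+1}.$$
Since $N(\Omega)$ is a compact manifold without boundary (it is diffeomorphic to $\partial\Omega$ via the inner unit normal), Stokes' theorem gives $\int_{N(\Omega)}d\phi=0$, and the identity of the proposition follows at once upon dividing by $-1$ and rearranging.

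The natural primitive to try is
$$\phi:=\frac{c_{n,k,q}}{2}\,\beta\wedge\gamma\wedge\theta_{0}^{n-k+q-1}\wedge\theta_{1}^{k-2q-1}\wedge\theta_{2}^{q}.$$
It has degree $2n-2$ and couples $\beta$ with $\gamma$ in the expectation that one Leibniz term ($d\beta=\theta_1$) will regenerate the $\theta_1$-power of $\gamma_{k,q}$, and another ($d\gamma=2\theta_0-2\epsilon\theta_2-2\epsilon\,\alpha\wedge\beta$) will split into a $\beta_{k,q}$-term (from $2\theta_0$) and a $\beta_{k+2,q+1}$-term (from $-2\epsilon\theta_2$), while the $-2\epsilon\,\alpha\wedge\beta$ piece is killed by the outer factor $\beta$ since $\beta\wedge\beta=0$.

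The decisive input is Lemma~\ref{anulen}: on $N(\Omega)$ one has $\alpha=0$ and $d\alpha=-\theta_s=0$. Consequently all the residual Leibniz terms vanish after pulling back: $d\theta_1=d\theta_2=0$ by Lemma~\ref{diferencials}, and $d\theta_0=-\epsilon(\alpha\wedge\theta_1+\beta\wedge\theta_s)$ restricts to zero. So the expansion of $d\phi$ on $N(\Omega)$ collapses to exactly the three terms above.

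The main obstacle is pure bookkeeping: one must verify that the exponents $(n-k+q,k-2q-1,q)$, $(n-k+q-1,k-2q,q)$, and $(n-k+q-1,k-2q-1,q+1)$ on $(\theta_0,\theta_1,\theta_2)$ which appear after differentiation match the exponents prescribed by the definitions of $\beta_{k,q}$, $\gamma_{k,q}$, and $\beta_{k+2,q+1}$, and that the combinatorial factor $c_{n,k,q}/2$ reproduces the constant $c_{n,k,q}$ in $\beta_{k,q}$ (factor $2$ absorbed) and the ratio $c_{n,k,q}/c_{n,k+2,q+1}$ in front of $\beta_{k+2,q+1}$. The strict inequalities $\max\{0,k-n\}<q<k/2<n$ in the hypothesis ensure that $n-k+q-1\ge 0$ and $k-2q-1\ge 0$, so that $\phi$ is a well-defined form and all three valuations $\Gamma_{k,q}$, $B_{k,q}$, $B_{k+2,q+1}$ are simultaneously defined.
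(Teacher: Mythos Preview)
Your proof is correct and follows essentially the same approach as the paper. The paper considers the primitive $\eta=(\theta_s-\beta\wedge\gamma)\wedge\theta_0^{n-k+q-1}\theta_1^{k-2q-1}\theta_2^q$ and computes $d\eta$ modulo the ideal generated by $\alpha$, $d\alpha$, and exact forms; since $\theta_s=-d\alpha$ lies in this ideal, the $\theta_s$ summand contributes nothing and the computation reduces exactly to yours with $\phi=\tfrac{c_{n,k,q}}{2}\beta\wedge\gamma\wedge\theta_0^{n-k+q-1}\theta_1^{k-2q-1}\theta_2^q$.
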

\begin{proof}We denote by $I$ the ideal generated by $\alpha$, $d\alpha$ and the exact forms. The elements of $I$ vanish on normal cycles, since these are Legendrian.
Thus, it is enough to prove \begin{equation}\label{modul}\gamma_{k,q}\equiv \beta_{k,q}-\epsilon\frac{c_{n,k,q}}{c_{n,k+2,q+1}}\beta_{k+2,q+1}\quad  \mathrm{mod}\ I.\end{equation}

Consider the form $\eta=\beta\wedge\gamma\wedge\theta_{0}^{n-k+q-1}\theta_{1}^{k-2q-1}\theta_{2}^q$. 
By Lemma \ref{diferencials} it follows that modulo $I$
$$d\eta\equiv \gamma\theta_{0}^{n-k+q-1}\theta_{1}^{k-2q}\theta_{2}^q-2\beta\theta_{0}^{n-k+q}\theta_{1}^{k-2q-1}\theta_{2}^q+2\epsilon\beta\theta_{0}^{n-k+q-1}\theta_{1}^{k-2q-1}\theta_{2}^{q+1}.$$
Using the definition of $\gamma_{k,q}$ and $\beta_{k,q}$ we obtain the relation in (\ref{modul}).
\end{proof}

\begin{remark}
 In complex dimensions $n=2,3$, the previous relations were found in \cite{park}.
\end{remark}

\begin{definition}We define  (for $\max\{0,k-n\}\leq q\leq \frac k 2 <n$) \begin{equation}\label{mus}\mu_{k,q}:=\left\{\begin{array}{rl}\mu^{\beta}_{k,q}&\text{ if }k\neq 2q \\ \mu^{\gamma}_{2q,q}&\text{ if }k=2q.\end{array}\right.\end{equation}
These valuations will be called {\em hermitian intrinsic volumes}.
In order to simplify some expressions, we define $\mu_{2n,n}:=\vol$.\end{definition}

\begin{remark}There is some arbitrariness in the previous choice.  It could be that a different convention may simplify  some formulas. 
\end{remark}

\begin{proposition}\label{baseMn}The hermitian intrinsic volumes
 $\mu_{k,q}$ (with $\max\{0,k-n\}\leq q\leq \frac k 2 \leq n$) form a basis of the space of invariant valuations of $\Mn$.
\end{proposition}
\begin{proof}
 For $\epsilon=0$ this was proved in \cite{bernig.fu}. For general $\epsilon$, the forms $\beta_{k,q},\gamma_{k,q}$ span the vector space of  invariant $(2n-1)$-forms of $S(\Mn)$ quotiented by the subspace of multiples of $\alpha,d\alpha$. Hence  $\{\mu_{k,q}\}$ span the space of invariant valuations of $\Mn$. It remains only to see that the dimension of this space does not depend on $\epsilon$. This follows from the following observation of J.~Fu: the space of invariant valuations of an isotropic homogeneous riemannian manifold is (non-canonically) isomorphic to the space of invariant valuations of the tangent space at any point. This is a consequence of Corollary 3.1.7 in \cite{alesker2}.
\end{proof}

\section{Variation formulas}\label{three}
\subsection{Variation of hermitian intrinsic volumes}\label{variation1}
In order to study the variation of the hermitian intrinsic volumes on $\Mn$, we follow the method used by Bernig and Fu \cite{bernig.fu}. First, we recall the definition of the Rumin operator, introduced in \cite{rumin}, and the notion of   Reeb vector field in a contact manifold. 

\begin{definition}Given $\omega\in\Upomega^{2n-1}(S(\Mn))$, let $\alpha\wedge\xi\!\in\!\Upomega^{2n-1}(S(\Mn))$ be the unique form such that $d(\omega+\alpha\wedge\xi)$ is a multiple of $\alpha$ (cf. \cite{rumin}). Then the Rumin operator $D$ is defined as $$D\omega:=d(\omega+\alpha\wedge\xi).$$
\end{definition}

\begin{definition}Let $M$ be a contact manifold and let $\alpha$ be the contact form. The Reeb vector field $T$ is the unique vector field over $M$ such that
\begin{equation}\label{campReeb}\left\{\begin{array}{l}
i_{T}\alpha=1,
\\ \mathcal{L}_{T}\alpha=0.
\end{array}\right.\end{equation}\end{definition}

If the contact manifold is the unit tangent bundle of a riemannian manifold, then the Reeb vector field corresponds to the geodesic flow (cf. \cite[p. 17]{blair}). 
\begin{lemma}\label{contraccio}In $S(\Mn)$, it is satisfied
$$\begin{array}{ll}
i_{T}\alpha=1, & i_{T}\theta_{1}=\gamma,\\
i_{T}\theta_{2}=\beta,\quad & i_{T}\beta=i_{T}\gamma=i_{T}\theta_{0}=i_{T}\theta_{s}=0.
\end{array}$$
\end{lemma}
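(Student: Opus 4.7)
The plan is to unpack the Reeb vector field explicitly and then substitute into the formulas defining the various $\theta$'s. On the unit tangent bundle of a Riemannian manifold, the Reeb vector field coincides with the generator of the geodesic flow, as cited in the remark just before the statement. Therefore at a point $(z,e_1)$ the vector $T$ projects under $d\pi$ to $e_1$, and its vertical component vanishes because $e_1$ is parallel along its own geodesic. From these two facts I read off the two basic contractions
\[
\omega_j(T)=(dz(T),e_j)=(e_1,e_j)=\delta_{1j},\qquad \omega_{1j}(T)=(\nabla_T e_1,e_j)=0.
\]
Taking real and imaginary parts immediately gives $\alpha(T)=1$, $\beta(T)=0$, and $\gamma(T)=0$, which handles three of the seven identities in the lemma.

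Next I plug these values into the defining expressions \eqref{defRefMob} of $\theta_0,\theta_1,\theta_2,\theta_s$, contracting in the first factor of each $\otimes$. Since $\theta_0$ has $\omega_{1i}$ in both slots, $i_T\theta_0=0$ is immediate. For $\theta_1$ and $\theta_s$ only the $i=1$ term survives under the contraction, and the surviving factor is $\omega_{11}$ (or its conjugate). Here I will use the unitarity identity $\omega_{11}+\overline{\omega_{11}}=0$, a consequence of $(e_1,e_1)=1$ being parallel, to rewrite $\omega_{11}=i\gamma$. Substitution then gives $i_T\theta_1=-\im(\overline{\omega_{11}})=-\im(-i\gamma)=\gamma$ and $i_T\theta_s=\re(\omega_{11})=\re(i\gamma)=0$. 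For $\theta_2$ the surviving contribution is $\overline{\omega_1}=\alpha-i\beta$, so $i_T\theta_2=-\im(\alpha-i\beta)=\beta$.

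There is no real obstacle in this argument: once the Reeb vector field is identified with the geodesic spray, every identity in the lemma is a one-line substitution. The only points that require a bit of care are the convention that the $\otimes$ appearing in the definitions of the $\theta$'s is to be contracted in its first slot, and keeping the signs straight in the sesquilinear Hermitian inner product when one passes from $\omega_{11}$ to $i\gamma$.
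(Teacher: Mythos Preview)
Your proof is correct and follows essentially the same route as the paper's: identify $T$ with the geodesic spray, read off $\omega_j(T)=\delta_{1j}$ and $\omega_{1j}(T)=0$, and substitute into the defining expressions \eqref{defRefMob}. The only noteworthy difference is that for $i_T\theta_s$ the paper uses Cartan's magic formula (writing $i_T\theta_s=-i_Td\alpha=di_T\alpha-\mathcal L_T\alpha=0$), whereas you compute it directly via $\omega_{11}=i\gamma$; both arguments are equally short.
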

\begin{proof}The first equality comes directly from  (\ref{campReeb}). Moreover, $ i_T\theta_s=-i_Td\alpha=di_T\alpha-\mathcal L_T\alpha=0.$ As $T$ is the geodesic flow, we have $\alpha_{i}(T)=\beta_{i}(T)=0$ and $\alpha_{1i}(T)=\beta_{1i}(T)=0$, $i\in\{2,\dots,n\}$. By definition in (\ref{defRefMob}), we obtain the result. 
\end{proof}

Given a smooth valuation $\mu$, and a vector field $X$ with flow $F_t$, we are interested in computing
\[
 \delta_X\mu(\Omega):=\left.\frac{d}{dt}\right|_{t=0}\mu(F_t(\Omega)).
\]
This can be done by means of the following result of \cite{bernig.fu}.

\begin{lemma}[Corollary 2.6 \cite{bernig.fu}]\label{variacio} Let $\Omega\subset \Mn$ belong to the class $\mathcal R(\Mn)$. Let $X$ be a smooth vector field on $\Mn$, and let $\mu$ be a smooth valuation given by a $(2n-1)$-form $\omega$ in $S(\Mn)$. Then $$\delta_X \mu (\Omega) = \int_{\partial\Omega} \langle X,\mathbf{n}\rangle\, \varphi^{*} (i_{T}(D\omega))$$ where $T$ is the Reeb vector field on $S(\Mn)$, $D\omega$ is the Rumin operator of $\omega$, $\mathbf{n}$ is the outward unit normal field, and $\varphi:\partial\Omega\rightarrow N(\Omega)$ is given by $\varphi(p)=(p,\mathbf{n}(p))$.
\end{lemma}

Although this result was stated in \cite{bernig.fu} for vector spaces, the proof given there is valid in an arbitrary riemannian manifold. 

By the previous lemma, the first variation of valuations is essentialy given by the following operator.

\begin{definition}We define a linear map $\delta$ from the space of smooth valuations to the quotient space $\Upomega^{2n-1}(\Mn)/ (\alpha,d\alpha)$ by$$\delta\mu=[i_{T}D\omega]$$ for $\mu$ defined by a $(2n-1)$-form $\omega\in\Upomega^{2n-1}(\Mn)$, and
$$\delta\mu=[2\lambda  \beta_{2n-1,n-1}]$$
for $\mu$ given by a smooth measure $\lambda \mathrm{vol}$. \end{definition}
This way, for every smooth valuation $\mu$ 
\begin{equation}\label{deltax}
 \delta_X\mu(\Omega)=\int_{\partial \Omega}\langle X,\mathbf{n}\rangle \varphi^*(\delta\mu).
\end{equation}

Further, we define $$B_{k,q}=[\beta_{k,q}], \Gamma_{k,q}=[\gamma_{k,q}]\in\Upomega^{2n-1}(\Mn)/ (\alpha,d\alpha).$$

\medskip
From Lemma \ref{diferencials}, we obtain the exterior differential of the forms $\beta_{k,q}$ and $\gamma_{k,q}$.

\begin{lemma}\label{derivadesE}In $\Mn$
$$d\beta_{k,q}=c_{n,k,q}(\theta_{0}^{n-k+q}\wedge\theta_{1}^{k-2q}\wedge\theta_{2}^q-\epsilon(n-k+q)\alpha\wedge\beta\wedge\theta_{0}^{n-k+q-1}\wedge\theta_{1}^{k-2q}\wedge\theta_{2}^q)$$
and
\begin{align*}d\gamma_{k,q} =& c_{n,k,q}(\theta_{0}^{n-k+q}\wedge\theta_{1}^{k-2q}\wedge\theta_{2}^q-\epsilon\theta_{0}^{n-k+q-1}\wedge\theta_{1}^{k-2q}\wedge\theta_{2}^{q+1}\\&-\epsilon\alpha\wedge\beta\wedge\theta_{0}^{n-k+q-1}\wedge\theta_{1}^{k-2q}\wedge\theta_{2}^q\\ &-\epsilon\frac{(n-k+q-1)}{2}\alpha\wedge\gamma\wedge\theta_{0}^{n-k+q-2}\wedge\theta_{1}^{k-2q+1}\wedge\theta_{2}^q\\ &-\epsilon\frac{(n-k+q-1)}{2}\beta\wedge\gamma\wedge\theta_{s}\wedge\theta_{0}^{n-k+q-2}\wedge\theta_{1}^{k-2q}\wedge\theta_{2}^q).\end{align*}
\end{lemma}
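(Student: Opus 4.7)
The approach is a direct application of the Leibniz (graded product) rule, using the exterior derivatives collected in Lemma \ref{diferencials}. Since $d\theta_1 = 0$ and $d\theta_2 = 0$, differentiating either $\beta_{k,q}$ or $\gamma_{k,q}$ reduces to differentiating only two factors: the leading $1$-form ($\beta$ or $\gamma$) and the power of $\theta_0$. All the other factors are either closed or contribute zero, so the bookkeeping is manageable once the signs and powers are tracked carefully.

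For $\beta_{k,q}$, applying $d$ yields $c_{n,k,q}$ times
$$d\beta\wedge\theta_0^{n-k+q}\wedge\theta_1^{k-2q-1}\wedge\theta_2^q - \beta\wedge (n-k+q)\,\theta_0^{n-k+q-1}\wedge d\theta_0\wedge\theta_1^{k-2q-1}\wedge\theta_2^q.$$
Substituting $d\beta=\theta_1$ merges the first $\theta_1$ into the power, giving the closed term. Substituting $d\theta_0 = -\epsilon(\alpha\wedge\theta_1+\beta\wedge\theta_s)$ produces two subterms in the second piece; however $\beta\wedge(\beta\wedge\theta_s)=0$ because $\beta$ is a $1$-form, so only the $\alpha\wedge\theta_1$ subterm survives. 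After swapping $\beta\wedge\alpha = -\alpha\wedge\beta$ and merging the new $\theta_1$ into the existing power, one obtains exactly the stated formula.

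For $\gamma_{k,q}$ the scheme is identical, but now two substitutions each contribute multiple surviving subterms. Using $d\gamma = 2\theta_0-2\epsilon\theta_2-2\epsilon\alpha\wedge\beta$ produces three pieces (a new factor of $\theta_0$ absorbing into the power, a new $\theta_2$ raising its exponent, and the $\alpha\wedge\beta$ tail); and $d\theta_0$ now contributes both the $\alpha\wedge\theta_1$ and $\beta\wedge\theta_s$ pieces because $\gamma$, not $\beta$, is the external $1$-form. After the anticommutations $\gamma\wedge\alpha = -\alpha\wedge\gamma$ and $\gamma\wedge\beta = -\beta\wedge\gamma$ and merging powers, the five terms in the statement appear, with the factor $\tfrac12$ from the definition of $\gamma_{k,q}$ absorbing the $2$'s coming out of $d\gamma$.

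The only real obstacle is clerical: keeping the signs straight (every move of a $1$-form across an even-degree form costs nothing, every move across a $1$-form flips sign) and confirming that the two annihilations $\beta\wedge\beta=0$ and $\gamma\wedge\gamma=0$ kill the correct subterms at the correct stage. No deeper structural argument is required, and no form of $d\alpha$ is needed since $\alpha$ itself never appears inside $\beta_{k,q}$ or $\gamma_{k,q}$ to begin with.
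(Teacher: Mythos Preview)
Your proposal is correct and follows exactly the approach the paper intends: the paper does not spell out a proof of this lemma, simply stating that it follows from Lemma~\ref{diferencials}, and your write-up is precisely the Leibniz-rule computation that this sentence is pointing at. The sign and power bookkeeping you describe checks out, including the vanishing of the $\beta\wedge\beta$ subterm in $d\beta_{k,q}$ and the survival of both $d\theta_0$ subterms in $d\gamma_{k,q}$.
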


The variation of the valuations $\{\mu_{k,q}\}$ on $\C^n$ was found in \cite[Proposition 4.6]{bernig.fu}. We extend that result to $\Mn$ as follows. 

\begin{proposition}\label{variacions}In $\Mn$, for $k\neq 2q$
\small{\begin{align*}
&\delta \mu_{k,q}\! = 2c_{n,k,q}(c_{n,k-1,q}^{-1} (k-2q)^2 \Gamma_{k-1,q}\!-\!  
c_{n,k-1,q-1}^{-1} (n+q-k)q \Gamma_{k-1,q-1} \\
&+c_{n,k-1 ,q-1}^{-1}(n+q-k+\frac 1 2)q  B_{k-1,q-1}-
 c_{n,k-1,q}^{-1} (k-2q)(k-2q-1)  B_{k-1,q} \\
 &+\!\epsilon(c_{n,k+1,q+1}^{-1}(k-2q)(k-2q-1) B_{k+1,q+1}\!-\!c_{n,k+1,q}^{-1}(n-k+q)(q+\frac{1}{2}) B_{k+1,q}))
\end{align*}}
and
{\begin{align*}
 \delta\mu_{2q,q} = &2c_{n,2q,q}\Big(- 
c_{n,2q-1,q-1}^{-1} (n-q)q \Gamma_{2q-1,q-1}
\\&  
+ c_{n,2q-1 ,q-1}^{-1}(n-q+\frac 1 2)q  B_{2q-1,q-1}
\\
&+\epsilon c_{n,2q+1,q}^{-1}(n-q-1)(q+1)\Gamma_{2q+1,q}\\ &-\epsilon c_{n,2q+1,q}^{-1}((n-q)(2q+\frac{3}{2})-\frac{1}{2}(q+1)) B_{2q+1,q} \\
&+\epsilon^2 c_{n,2q+3,q+1}^{-1}(n-q-1)(q+\frac{3}{2}) B_{2q+3,q+1}
\Big).
\end{align*}}
\end{proposition}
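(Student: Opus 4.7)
The plan is to apply Lemma \ref{variacio} with $\rho\in\{\beta_{k,q},\gamma_{2q,q}\}$, which reduces the variation to the computation of the Rumin derivative $D\rho$, its contraction with the Reeb field $T$, and integration against $\langle X,N\rangle$ over $N(\Omega)$. By Lemma \ref{contraccio}, the contraction $i_T$ turns every factor $\theta_1$ into $\gamma$ and every factor $\theta_2$ into $\beta$, while annihilating $\beta$, $\gamma$, $\theta_0$ and $\theta_s$. This is exactly the operation that converts polynomials in the $\theta_i$'s into the defining integrands of $\beta_{k',q'}$ and $\gamma_{k',q'}$, producing the terms $\tilde B_{k',q'}$ and $\tilde\Gamma_{k',q'}$ on the right-hand side.

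To compute $D\rho$, one must find a $(2n-2)$-form $\xi$ such that $d(\rho+\alpha\wedge\xi)$ is a multiple of $\alpha$. By Lemma \ref{derivadesE}, the only obstruction to $d\rho$ being already a multiple of $\alpha$ is a single horizontal monomial; for $\rho=\beta_{k,q}$ it is $c_{n,k,q}\,\theta_0^{n-k+q}\wedge\theta_1^{k-2q}\wedge\theta_2^q$. Since $d\alpha=-\theta_s$, the determination of $\xi$ reduces to a Lefschetz-type inversion of multiplication by $\theta_s$ on the horizontal algebra generated by $\theta_0,\theta_1,\theta_2$ modulo $\alpha$. The algebraic relations among these forms coincide with those on $S(\C^n)$, so the explicit inversion carried out in \cite[proof of Proposition 4.6]{bernig.fu} can be reused verbatim to produce $\xi$. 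Once $\xi$ is known, $D\rho=-\alpha\wedge d\xi+(\text{$\alpha$-multiple pieces of }d\rho)$, and Lemma \ref{anulen} implies that the restriction of $i_T(D\rho)$ to $N(\Omega)$ is entirely determined by these two ingredients.

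The curvature enters through $d\theta_0=-\epsilon(\alpha\wedge\theta_1+\beta\wedge\theta_s)$ in Lemma \ref{diferencials}: every differentiation of a factor $\theta_0$ produces an $\epsilon$-correction. Expanding $d\xi$ by Leibniz and collecting terms, the $\epsilon^{0}$ part of $i_T(D\rho)$ reproduces the Bernig--Fu formula, yielding the first four summands of $\delta_X B_{k,q}$ and the first two summands of $\delta_X\Gamma_{2q,q}$. The additional $\epsilon$-terms in the statement split into two groups: (i) those coming from the explicit $\alpha\wedge\beta\wedge(\cdots)$ summands already present in $d\rho$ in Lemma \ref{derivadesE}, which via identities such as $\beta\wedge\theta_0^{n-k+q-1}\wedge\theta_1^{k-2q}\wedge\theta_2^q=c_{n,k+1,q}^{-1}\beta_{k+1,q}$ contribute the $\tilde B_{k+1,q}$, $\tilde B_{2q+1,q}$ and $\tilde\Gamma_{2q+1,q}$ pieces; and (ii) those generated when $d\theta_0\neq 0$ is applied to the monomials in $\xi$, which produce the remaining shifted-index terms $\tilde B_{k+1,q+1}$ and, through a second iteration, the $\epsilon^{2}$ contribution $\tilde B_{2q+3,q+1}$ in $\delta_X\Gamma_{2q,q}$. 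The main obstacle is the combinatorial bookkeeping of the coefficients $c_{n,k,q}$ across the Lefschetz inversion and the iterated Leibniz expansion; the structural framework is identical to the euclidean case of \cite{bernig.fu}, the novel work being the consistent tracking of the $d\theta_0$-corrections.
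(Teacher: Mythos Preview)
Your overall strategy is correct and matches the paper's: apply Lemma \ref{variacio}, compute the Rumin differential by finding $\xi$ via the Bernig--Fu Lefschetz inversion, and track the $\epsilon$-corrections through Lemma \ref{diferencials}. However, two points in your description of the mechanism are wrong, and following them literally the computation would not close.

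First, the $\epsilon$-corrections in $d\xi$ (modulo $\alpha$ and $d\alpha$) do \emph{not} come from $d\theta_0$. By Lemma \ref{diferencials}, $d\theta_0=-\epsilon(\alpha\wedge\theta_1+\beta\wedge\theta_s)$ vanishes modulo the ideal $(\alpha,d\alpha)$, since $\theta_s=-d\alpha$. The actual source is $d\gamma=2\theta_0-2\epsilon\theta_2-2\epsilon\alpha\wedge\beta$: the explicit Bernig--Fu form $\xi_{k,q}$ contains the factor $\gamma$, and differentiating it produces the term $-2\epsilon\theta_2$ that yields the $\tilde B_{k+1,q+1}$ contribution (and part of the $\tilde B_{k+1,q}$ coefficient, the rest coming from your group (i)). Your attribution to $d\theta_0$ would give zero.

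Second, for $\rho=\gamma_{2q,q}$ the correction form $\xi$ cannot be taken over from \cite{bernig.fu} unchanged. By Lemma \ref{derivadesE}, $d\gamma_{2q,q}$ has \emph{three} terms not divisible by $\alpha$, two of which already carry a factor $\epsilon$: namely $-\epsilon\,\theta_0^{n-q-1}\theta_2^{q+1}$ and $-\epsilon\,\tfrac{n-q-1}{2}\,\beta\gamma\theta_s\theta_0^{n-q-2}\theta_2^q$. Hence $\xi$ itself must be $\epsilon$-dependent: the paper takes $\xi=\xi_1^\epsilon-\epsilon\,\xi_2^\epsilon-\epsilon\,\xi_3^\epsilon$, where $\xi_1^\epsilon,\xi_2^\epsilon$ correspond to the Bernig--Fu forms $\xi_{2q,q},\xi_{2q+2,q+1}$ and $\xi_3^\epsilon=\tfrac{n-q-1}{2}\beta\gamma\theta_0^{n-q-2}\theta_2^q$ handles the third obstruction directly. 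The $\epsilon^2$ term $\tilde B_{2q+3,q+1}$ then arises as the product of the explicit $\epsilon$ in front of $\xi_2^\epsilon$ with the $d\gamma$-correction in $d\xi_2^\epsilon$; it is not a ``second iteration'' of $d\theta_0$.
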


\begin{proof}
We will use the following fact from the proof of Proposition 4.6 in \cite{bernig.fu}: for $\max\{0,k-n\}\leq q\leq k/2<n$ there exists an invariant form $\xi_{k,q}\in \Upomega^{2n-1}(S(\C^n))$ such that
\begin{equation}\label{dxi}
d\alpha\wedge \xi_{k,q}\equiv -\theta_0^{n-k+q}\theta_1^{k-2q}\theta_2^q\qquad \mathrm{mod} (\alpha),
\end{equation}
(we omit the wedges for readibility) and
\begin{align}\label{xi}
\xi_{k,q}  \equiv\,\, & \beta\gamma \theta_0^{n+q-k-1} \theta_1^{k-2q-2} \theta_2^{q-1}\\&\nonumber \wedge\left((n+q-k)q  \theta_1^{2}-(k-2q)(k-2q-1)\theta_0\theta_2 \right)\qquad \mathrm{mod} (\alpha,d\alpha).
\end{align}
In order to find $\delta\mu_{k,q}^{\beta}$ for general $\epsilon$, we take a form $\xi^\epsilon\in\Upomega^{2n-1}(S(\Mn))$ such that $\xi_{(p,v)}^\epsilon\equiv\xi_{k,q}(p',v')$ when we identify $T_{(p,v)}S(\Mn)$ and $T_{(p',v')}\C^n$, for every $(p,v)\in S(\Mn), (p',v')\in S(\C^n)$. Then, it is clear from Lemma \ref{derivadesE} and (\ref{dxi}) that $d(\beta_{k,q}+c_{n,k,q}\alpha\wedge\xi^\epsilon)\equiv 0$ modulo $\alpha$.

By Lemma \ref{diferencials}, the exterior differential of $\xi^\epsilon$ is 
\begin{align*}d\xi^\epsilon\equiv\,\, & \theta_{0}^{n+q-k-1}\theta_{1}^{k-2q-2}\theta_{2}^{q-1}((n-k+q)q\theta_{1}^2-(k-2q)(k-2q-1)\theta_{0}\theta_{2})\\&\wedge(\gamma\theta_{1}-2\beta\theta_{0}+2\epsilon\beta\theta_{2})\quad  \mathrm{mod}(\alpha,d\alpha)\end{align*}
and the contraction of $d\beta_{k,q}$ with respect to the field $T$, by Lemma \ref{contraccio}, is 
\begin{align*}i_{T}d\beta_{k,q}\equiv &\,\, c_{n,k,q}\theta_{0}^{n+q-k-1}\theta_{1}^{k-2q-1}\theta_{2}^{q-1}\\&\wedge((k-2q)\gamma\theta_{0}\theta_{2}+q\beta\theta_{0}\theta_{1}-\epsilon(n-k+q)\beta\theta_{1}\theta_{2})\quad  \mathrm{mod}(\alpha).\end{align*}

By substituting the last expressions in $i_{T}D\beta_{k,q}\equiv i_{T}d\beta_{k,q}-c_{n,k,q}d\xi^{\epsilon}$ ($\mathrm{mod}$ $\alpha$, $d\alpha$), we get the result. 

\vspace{0.3cm}

To compute $\delta\mu^\gamma_{2q,q}$, note that $d\gamma_{2q,q}$ has 3 terms which are not multiple of $\alpha$ (cf. Lemma \ref{derivadesE}). As before we consider $\xi_1^\epsilon,\xi_2^\epsilon\in \Upomega^{2n-1}(S(\Mn))$ corresponding to $\xi_{2q,q}$, and $\xi_{2q+2,q+1}$ respectively. Let us consider also
\begin{align}\label{tres}\xi_{3}^\epsilon=\,\, &  \frac{n-q-1}{2}\beta\gamma \theta_0^{n-q-2} \theta_2^{q}.\end{align}
Then the Rumin differential of $\gamma_{2q,q}$ is given by $D\gamma_{2q,q}=d(\gamma_{2q,q}+c_{n,2q,q}\alpha\wedge(\xi_1^\epsilon-\epsilon\xi_{2}^\epsilon-\epsilon\xi_{3}^\epsilon))$. Indeed, $d\alpha\wedge\xi_1^\epsilon$ cancels the first term of $d\gamma_{2q,q}$ modulo $\alpha$, and $d\alpha\wedge\xi_{2}^\epsilon$ cancels the second one. The third term is canceled exactly by $d\alpha\wedge\xi_{3}^\epsilon$. 

Now, using Lemmas \ref{derivadesE} and \ref{contraccio}
\[
i_Td\gamma_{2q,q}\!\equiv\! q \beta\theta_0^{n-q}\theta_2^{q-1}-\epsilon(q+2)\beta\theta_0^{n-q-1}\theta_2^q-\epsilon\frac{n-q-1}{2}\gamma\theta_0^{n-q-2}\theta_1\theta_2^q\quad \mathrm{mod} (\alpha,d\alpha).
\]
From (\ref{xi}) and (\ref{tres})
\[
d\xi_1^\epsilon\equiv (n-q)q\theta_0^{n-q-1}\theta_2^{q-1}(\gamma\theta_1-2\beta\theta_0+2\epsilon\beta\theta_2)\qquad \mathrm{mod} (\alpha,d\alpha).
\]
\[
d\xi_2^\epsilon\equiv (n-q-1)(q+1)\theta_0^{n-q-2}\theta_2^{q}(\gamma\theta_1-2\beta\theta_0+2\epsilon\beta\theta_2)\qquad \mathrm{mod} (\alpha,d\alpha).
\]
\[
d\xi_3^\epsilon\equiv \frac{n-q-1}{2}\theta_0^{n-q-2}\theta_2^{q}(\gamma\theta_1-2\beta\theta_0+2\epsilon\beta\theta_2)\qquad \mathrm{mod} (\alpha,d\alpha).
\]
Plugging this into $i_TD\gamma_{2q,q}\equiv i_Td\gamma_{2q,q}-c_{n,2q,q}(d\xi_1^\epsilon-\epsilon d\xi_2^\epsilon-\epsilon d\xi_3^\epsilon)$ mod $(\alpha,d\alpha)$ gives the result.
\end{proof}

\subsection{Variation of $\phi_{n-r}$}\label{variation2}
Recall that our goal is to determine 
\[
  \phi_{n-r}(\Omega)=\int_{\L_r^\C}\chi(\Omega\cap L_r)dL_r,\qquad r=1,\cdots, n-1
\]
where  $\L_{r}^{\C}$ is the space of totally geodesic complex submanifolds of complex dimension $r$ of $\Mn$. This is a homogeneous space with an invariant density given by the following lemma (as usual, $J$ denotes the complex structure).

\begin{lemma}[\cite{santalo.hermitian}]\label{lema.densitat}$\L_{r}^{\C}$ is a homogeneous space and $$\L_{r}^{\C}\cong U_{\epsilon}(n)/U_{\epsilon}(r)\times U(n-r)$$ where $$U_{\epsilon}(n)=\left\{\begin{array}{ll}\C^n\ltimes U(n), & \text{ if }\epsilon=0, \\ U(1+n), & \text{ if }\epsilon>0, \\ U(1,n), & \text{ if }\epsilon<0.\end{array}\right.$$
Let $\{g;g_{1},Jg_{1},\dots,g_{n},Jg_{n}\}$ be a local orthonormal frame associated to the elements of an open set $V\subset\L_{r}^{\C}$ such that $\{g_{n-r+1},Jg_{n-r+1},\dots,g_{n},Jg_{n}\}$ generate $T_{g}L$ for each $L\in V$. The invariant density of $\L_{r}^{\C}$ is given by 
\begin{equation}\label{densitat}
dL_{r}=\left|\bigwedge_{i=1}^{n-r}\alpha_{i}\wedge\beta_{i}
\mathop{\bigwedge_{i=n-r+1,\dots,n}}_{j=1,\dots,n-r}\alpha_{ij}\wedge\beta_{ij}\right|
\end{equation} where
$\{\alpha_i,\beta_i,\alpha_{i,j},\beta_{i,j}\}_{\{i,j\}}$ are defined as in \eqref{newLabel}.
\end{lemma}

The grassmannian $G_{n,r}^{\C}$ of complex $r$-planes in $\C^n$ can be identified with $\mathcal L_{r-1}^\C$ in $M_1^{n-1}\equiv\mathbb{CP}^{n-1}$.
With the normalization considered, the volume of $G_{n,r}^{\C}$ equals
$$\vol(G_{n,r}^{\C})={\vol(U(n)) \over \vol(U(r))\vol(U(n-r))}=\frac{\pi^{r(n-r)}1!2!\cdots (r-1)!}{(n-1)!(n-2)!\cdots (n-r)!}.
$$

\begin{remark}
 A different normalization for $dL_r$ was taken in \cite{bernig.fu}.  Indeed, the measure used there is 
\[
 dE_r=\frac{\binom{n-1}{r}}{\omega_{2n}\vol(G_{n-1,r}^{\C})}dL_r.
\]

\end{remark}

\bigskip On every $C^1$ real hypersurface $S\subset\Mn$ oriented by a unit normal $\mathbf{n}$ there is a canonical vector field given by $J\mathbf{n}$. There is also a distribution $\mathcal D=\mathrm{span} \{\mathbf{n},J\mathbf{n}\}^\bot$, so that $\mathcal D_x$ is the maximal complex linear subspace of $T_xM_\epsilon^n$ contained in $T_xS$ for every $x\in S$. We shall consider the bundle $G_{n-1,r}^{\C}(\D)$ whose fiber at every point $x\in S$ is the Grassmanian $G_{n-1,r}^{\C}(\D_{x})$ of $r$-dimensional complex subspaces of $\mathcal D_x$.

\begin{proposition}\label{variacioMesura}Let $\Omega\in\mathcal R(\Mn)$, and let $X$ be a smooth vector field on $\Mn$. Then $$\delta_{X}\phi_{n-r}(\Omega)
=\int_{\partial \Omega}\langle X, \mathbf{n}\rangle\left(\int_{G_{n-1,r}^{\C}(\D_{x})}\det(\mbox{\em II}|_{V})dV\right)dx$$ where $\mathbf{n}$ is the unit outward normal field and $\det(\mbox{\em II}|_{V})$ denotes the determinant of the second fundamental form $\mbox{\em II}$ of $\partial\Omega$ restricted to $V\in G_{n-1,r}^{\C}(\D_{x})$ being $\D$ the distribution of tangent complex hyperplanes.
\end{proposition}
\begin{proof}
We follow the same procedure as in \cite[Theorem 4]{solanes.gb}.

For every $V\in G_{n-1,r}^{\C}(\D_x)$, we make the parallel translation $V_t$ of $V$ along $F_t(x)$, the flow associated to $X$. Recall that parallel translation preserves the complex structure (cf. \cite[p. 326]{o'neill}). Then we project orthogonally $V_t$ onto $\mathcal D_{F_t(x)}$, obtaining a complex $r$-plane $V_t'$ (at least for small values of $t$).
We define $$\func{\psi}{G_{n-1,r}^{\C}(\D)\times(-\epsilon,\epsilon)}{\L_{r}^{\C}}{((x,V),t)}{\exp_{F_{t}(x)}V_t'.}$$ 

From Proposition 3 in \cite{solanes.gb} (whose proof works without change in our setting), we have
\[
\phi_{n-r}(\Omega_{h})-\phi_{n-r}(\Omega_0)=\int_{\mathcal L_r^\C}\sum \text{sign}\langle X,\mathbf{n}\rangle\,\text{sign} (\sigma_{2r}(\mbox{II}|_V))dL_r
\]
where the sum runs over the tangencies of a generic $L_r$ with the hypersurfaces $\partial \Omega_t$ with $0<t<h$. As $$\psi^*(dL_{r})=\iota_{\partial t}(\psi^*(dL_{r}))dt=\psi_{t}^*(\iota_{d\psi\partial t}dL_{r})dt$$ where $\psi_{t}=\psi(\cdot,t)$,  using the co-area formula (cf. \cite{federer})  we get 
\begin{equation}
 \label{formulot}\phi_{n-r}(\Omega_{h})-\phi_{n-r}(\Omega_0)=\int_0^h\int_{G^{\C}_{n-1,r}(\D)}\langle X,\mathbf{n}\rangle\,\text{sign} (\sigma_{2r}(\mbox{II}|_V))\psi_{t}^*(\iota_{dF\partial t}dL_{r})dt.
\end{equation}

Let $\{g;g_{1},Jg_{1},\dots,g_{n},Jg_{n}\}$ be a local orthonormal frame defined on $G_{n-1,r}^{\C}(\D)\times(-\epsilon,\epsilon)$ such that $g((x,l),t)=F(x,t)$, $g_{1}((x,l),t)$ is orthogonal to $\partial\Omega_{t}$ (at $F_t(x)$) and $\psi=\langle g,g_{n-r+1},Jg_{n-r+1},\dots,g_{n},Jg_{n}\rangle\cap\Mn$. We may assume the frame is defined in a neigborhhod of $\mathcal L_r$, since we are only interested in regular points of $\psi$.

Consider the curve $L_{r}(t)$ given by the parallel translation of $L_{r}$ along the geodesic given by $\mathbf{n}$, the outward normal vector to $\partial\Omega_{0}$. 
If $P\in T_{L_r}\mathcal L_r^\C$ denotes the tangent vector to $L_{r}(t)$ at  $t=0$, then 
\begin{align*}\omega_{i}(P)&=( dg(P),g_{i})=\Big(\left.\frac{d}{dt}\right|_{t=0}g(L_{r}(t)),g_{i}\Big)=0, 
\\\alpha_{1}(P)&=\langle dg(P),\mathbf{n}\rangle =1,
\\\omega_{kj}(P)&=( \nabla g_{k}(P),g_{j})=\left(\left.\nabla_{\partial \over\partial t} g_{k}(L_{r}(t))\right|_{t=0},g_{j}\right)=0\end{align*}
where $i\in\{2,\dots,n-r\}$, $j\in\{1,\dots,n-r\}$, and $k\in\{n-r+1,\dots,n\}$.
By (\ref{densitat}) and last equations we get the following equality between densities
$$dL_{r}=|\alpha_{1}|i_{P}dL_{r}$$
since $i_{P}dL_{r}=|\beta_1\bigwedge_{h=2}^{n-r}\alpha_{h}\wedge\beta_{h}\bigwedge\alpha_{ij}\wedge\beta_{ij}|$.
Thus, 
$$i_{\partial \psi\over\partial t}dL_{r}=\big|\alpha_{1}\Big({\partial \psi\over\partial t}\Big)\big|\, i_{P}dL_{r}+|\alpha_{1}|i_{\partial\psi\over\partial t}i_{P}dL_{r}$$ and
\begin{align*}\alpha_{1}\Big({\partial\psi\over\partial t}\Big)&=\big\langle dg\Big({\partial\psi\over\partial t}\Big),\mathbf{n}\big\rangle=\big\langle\frac{\partial F}{\partial t},\mathbf{n}\big\rangle,
\\\psi_{0}^*(\alpha_{1})(v)&=\langle dg(d\psi_{0}(v)),\mathbf{n}\rangle=0\quad\forall v\in T_{(p,V)}G_{n,r}^{\C}(T\partial\Omega_{0}).
\end{align*}
So, $$\psi_{0}^*(i_{{\partial\psi\over\partial t}}dL_{r})=|\langle \frac{\partial F}{\partial t},\mathbf{n}\rangle|\psi_{0}^*(i_{P}dL_{r}).$$

Finally,  using that $\psi_{0}^*(i_{P}dL_{r})=|\det(\mbox{II}|_V)|dVdx$, we get the result.
\end{proof}

\begin{remark}The integral
\[ \int_{G_{n-1,r}^\mathbb{C}} \det(\mbox{II}|_V)dV                                                                                           \]
seems difficult to compute  by direct means. However, we will find it by an indirect method (cf.~Remark \ref{solucio}). The analogous integral in real space forms is a multiple of an elementary symmetric function of the principal curvatures.\end{remark}

\section{Crofton type formulas}\label{four}
\subsection{In the standard Hermitian space}
Now we are ready to prove formula (\ref{Crofton}) for $\epsilon=0$. The following lemma will be used. 
\begin{lemma}\label{entries}Let $\Omega\subset\C^n$ be a compact domain with smooth boundary, and let $\varphi:\partial\Omega\rightarrow N(\Omega)$ the canonical map. Fix a point $x$ in $\partial\Omega$ and a reference $\{e_{1}=\varphi(x),e_{\overline{1}}=Je_{1},\dots,e_{n},e_{\overline{n}}=Je_{n}\}$ at $x$. Then $\varphi^{*}(\gamma_{k,q})=P_{k,q}dx$ where $dx$ is the volume element of $\partial\Omega$ and $P_{k,q}$ is a polynomial of degree $2n-k-1$ in the entries of the second fundamental form $h_{ij}=\mathrm{II}(e_{i},e_{j})$, $i,j\in\{\overline{1},2,\overline{2},\dots,\overline{n}\}$. Each of the monomials of $P_{k,q}$ containing only entries of the form $h_{ii}$ contains the factor $h_{\overline{1}\overline{1}}$ and exactly $n+q-k-1$ factors of the form $h_{jj}h_{\overline{j}\overline{j}}$, $i\in\{\overline{1},2,\overline{2},\dots,\overline{n}\}, j\in\{2,\dots,n\}$. 
\end{lemma}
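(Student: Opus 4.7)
The plan is to pull back each constituent form in $\gamma_{k,q}=\tfrac{c_{n,k,q}}{2}\gamma\wedge\theta_{0}^{n-k+q-1}\wedge\theta_{1}^{k-2q}\wedge\theta_{2}^{q}$ via $\varphi$ and to extract the coefficient of the volume form. Extending the given unitary reference at $x$ to a local orthonormal frame along $\partial\Omega$ with $e_{1}$ the inner unit normal, let $\{\theta^{\bar 1},\theta^{j},\theta^{\bar j}\}_{j=2,\dots,n}$ denote the dual coframe on $\partial\Omega$. Lemma \ref{anulen} gives $\varphi^{*}\alpha_{1}=0$, while $\varphi^{*}\beta=\theta^{\bar 1}$, $\varphi^{*}\alpha_{j}=\theta^{j}$ and $\varphi^{*}\beta_{j}=\theta^{\bar j}$ for $j\ge 2$. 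Combining $\omega_{1j}=(\nabla e_{1},e_{j})$ with the Weingarten identity $\nabla_{v}e_{1}=-S(v)$, where $\langle S(v),w\rangle=\mathrm{II}(v,w)$, one obtains
$$\varphi^{*}\alpha_{1j}=-\sum_{k}h_{kj}\,\theta^{k},\qquad \varphi^{*}\beta_{1j}=-\sum_{k}h_{k\bar j}\,\theta^{k}\qquad (j\ge 1),$$
together with $\varphi^{*}\alpha_{11}=0$ and $\varphi^{*}\gamma=\varphi^{*}\beta_{11}=-\sum_{k}h_{k\bar 1}\,\theta^{k}$, where $k$ runs over the tangent indices $\{\bar 1,2,\bar 2,\dots,n,\bar n\}$.

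Substituting these into the definitions, $\varphi^{*}\theta_{0}$ is a 2-form quadratic in the $h_{ab}$, $\varphi^{*}\theta_{1}$ is linear in them (most cleanly computed from $\theta_{1}=d\beta=-\sum_{j}(\alpha_{1j}\wedge\beta_{j}+\beta_{1j}\wedge\alpha_{j})$), and $\varphi^{*}\theta_{2}=\sum_{j=2}^{n}\theta^{j}\wedge\theta^{\bar j}$ is constant in $h$. Since each factor of $\gamma_{k,q}$ is homogeneous in its polynomial degree, the coefficient $P_{k,q}$ of the volume form in $\varphi^{*}\gamma_{k,q}$ is homogeneous of total degree
$$1+2(n-k+q-1)+(k-2q)+0\cdot q=2n-k-1,$$
establishing the first claim.

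For the second claim, since the $h_{ab}$ are algebraically independent, every monomial of $P_{k,q}$ involving only diagonal entries must arise from the product of the diagonal parts of each factor. A direct inspection shows that $\varphi^{*}\gamma|_{\mathrm{diag}}$ is proportional to $h_{\bar 1\bar 1}\theta^{\bar 1}$, $\varphi^{*}\theta_{0}|_{\mathrm{diag}}$ is proportional to $\sum_{j=2}^{n}h_{jj}h_{\bar j\bar j}\,\theta^{j}\wedge\theta^{\bar j}$, and $\varphi^{*}\theta_{1}|_{\mathrm{diag}}$ is proportional to $\sum_{j=2}^{n}(h_{jj}+h_{\bar j\bar j})\,\theta^{j}\wedge\theta^{\bar j}$. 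Since each $\theta^{j}\wedge\theta^{\bar j}$ squares to zero, the diagonal part of $\varphi^{*}(\theta_{0}^{n-k+q-1}\wedge\theta_{1}^{k-2q}\wedge\theta_{2}^{q})$ expands as a sum over ordered partitions $\{2,\dots,n\}=J_{0}\sqcup J_{1}\sqcup J_{2}$ with $|J_{0}|=n-k+q-1$, $|J_{1}|=k-2q$, $|J_{2}|=q$. Combined with $\varphi^{*}\gamma|_{\mathrm{diag}}$, each such contribution yields an $h$-monomial
$$h_{\bar 1\bar 1}\prod_{j\in J_{0}}h_{jj}h_{\bar j\bar j}\prod_{j\in J_{1}}(h_{jj}\text{ or }h_{\bar j\bar j})$$
wedged with the top-degree form $\theta^{\bar 1}\wedge\bigwedge_{j=2}^{n}\theta^{j}\wedge\theta^{\bar j}\propto dx$. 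Every such monomial contains $h_{\bar 1\bar 1}$ and exactly $|J_{0}|=n+q-k-1$ pairs of the form $h_{jj}h_{\bar j\bar j}$, as asserted.

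The principal subtlety is verifying that the diagonal part of $\varphi^{*}\theta_{1}$ consists of singleton contributions $(h_{jj}+h_{\bar j\bar j})$ rather than paired contributions $h_{jj}h_{\bar j\bar j}$, so that every pair in the final monomial originates from the $\theta_{0}^{n-k+q-1}$ factor; this reduces to a direct expansion of $d\beta$ using the pullbacks computed above.
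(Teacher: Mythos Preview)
Your proof is correct and follows essentially the same approach as the paper: both arguments pull back the factors $\gamma,\theta_0,\theta_1,\theta_2$ to $\partial\Omega$ via the Weingarten relation, read off the homogeneity degree $1+2(n-k+q-1)+(k-2q)=2n-k-1$, and then isolate the diagonal terms factor by factor to see that $h_{\bar 1\bar 1}$ is forced by $\gamma$ and each pair $h_{jj}h_{\bar j\bar j}$ comes only from $\theta_0$. The paper writes out the full expansion of $\varphi^*\gamma_{k,q}$ in one displayed formula, whereas you organize the same computation more conceptually via the partition $J_0\sqcup J_1\sqcup J_2$, but the substance is identical.
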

\begin{proof}From (\ref{defRefMob}) we have $\theta_{0}=\sum_{i=2}^{n}\alpha_{1i}\wedge\beta_{1i}$, $\theta_{1}=\sum_{i=2}^n(\alpha_{i}\wedge\beta_{1i}-\beta_{i}\wedge\alpha_{1i})$, $\theta_{2}=\sum_{i=2}^n\alpha_{i}\wedge\beta_{i}$.

For convenience we write $\alpha_{\overline{i}}=\beta_{i}$ and $\alpha_{1\overline{i}}=\beta_{1i}$ for $i\in\{2,\dots,n\}$. Using $\alpha_{1j}=\sum_{i\in I}h_{ij}\alpha_{i}$ for $j\in I:=\{\overline{1},2,\overline{2},\dots,\overline{n}\}$ yields
\begin{multline*}\varphi^*(\gamma_{k,q})=\frac{c_{n,k,q}}{2}\left(\sum_{j\in I}h_{\overline{1}j}\alpha_{j}\right)\wedge\left(\sum_{i=2}^{n}\sum_{j,l\in I}h_{ij}h_{\overline{i}l}\alpha_{j}\alpha_{l}\right)^{n+q-k-1}\wedge\\\wedge\left(\sum_{i=2}^{n}\left(\sum_{j\in I}h_{\overline{i}j}\alpha_{i}\alpha_{j}-\sum_{l\in I}h_{il}\alpha_{\overline{i}}\alpha_{l}\right)\right)^{k-2q}\wedge\left(\sum_{i=2}^n\alpha_{i}\alpha_{\overline{i}}\right)^q.\end{multline*}
Thus, $\varphi^{*}(\gamma_{k,q})=P_{k,q}dx$ with $P_{k,q}$ a polynomial of degree $2n-k-1$ in $h_{ij}$.
The terms in the previous expression containing only entries of type $h_{ii}$ are 
\begin{multline*}\frac{c_{n,k,q}}{2}h_{\overline{1}\overline{1}}\alpha_{\overline{1}}\wedge\left(\sum_{i=2}^{n}h_{ii}h_{\overline{i}\overline{i}}\alpha_{i}\alpha_{\overline{i}}\right)^{n+q-k-1}\wedge\\\wedge\left(\sum_{i=2}^{n}h_{\overline{i}\overline{i}}\alpha_{i}\alpha_{\overline{i}}-h_{ii}\alpha_{\overline{i}}\alpha_{i}\right)^{k-2q}\wedge\left(\sum_{i=2}^n\alpha_{i}\alpha_{\overline{i}}\right)^q,\end{multline*}
and the result follows.
\end{proof}

\begin{theorem}\label{variaciorplans}
In $\C^n$, the variation of the valuation $\phi_{n-r}$ is given by
\begin{align}\label{standard1}\nonumber\delta\phi_{n-r}&=\vol(G_{n-1,r}^{\C})\omega_{2r+1}(r+1)\binom{n-1}{r}^{-1}\binom{n}{r}^{-1}\cdot\\&\quad\cdot\left(\sum_{q=\max\{0,n-2r-1\}}^{n-r-1}\binom{2n-2r-2q-1}{n-r-q}\frac{1}{4^{n-r-q-1}}B_{2n-2r-1,q}\right).\end{align}
Furthermore,  the following equation between valuations holds
\begin{align}\label{cn}\phi_{n-r}&=\frac{\vol(G_{n-1,r}^{\C})\omega_{2r}}{\binom{n-1}{r}\binom{n}{r}}\left(\sum_{q=\max\{0,n-2r\}}^{n-r}\frac{1}{4^{n-r-q}}\binom{2n-2r-2q}{n-r-q}\mu_{2n-2r,q}\right)\!\!.\end{align}
\end{theorem}
\begin{proof}
In order to simplify the following computations, we consider
\begin{equation}\label{canvip}\mu'_{k,q}:=c_{n,k,q}^{-1}\mu_{k,q}\quad  (\mbox{for }k\neq 2q),\quad\mu_{2q,q}':=2c_{n,2q,q}^{-1}\mu_{2q,q}\end{equation}
and \begin{equation} B'_{k,q}=c_{n,k,q}^{-1} B_{k,q},\quad\Gamma'_{k,q}=2c_{n,k,q}^{-1}\Gamma_{k,q}.\end{equation}

By \cite{fu.indiana}, the functional $\phi_{n-r}$ is a valuation on $\C^n$ with degree of homogeneity $2n-2r$. Thus, by Proposition \ref{baseMn} it can be expressed as a linear combination of $\{\mu'_{2n-2r,q}\}$; i.e.
\begin{equation}\label{igual1}\phi_{n-r}=\sum_{q=\max\{0,n-2r\}}^{n-r-1}C_{q}\mu'_{2n-2r,q}+D\mu'_{2n-2r,n-r}\end{equation} for certain constants $C_{q}, D$ which we wish to determine.
This will be done by comparing the variation of both sides of this equality. From here on we assume $2r<n$. The case $2r\geq n$ can be treated in the same way, or can be reduced to the previous case by means of the Fourier transform (cf. \cite{bernig.fu} Sections 2.1, 3.1 and 3.2).

By Proposition \ref{variacions}, 
\begin{equation}\label{suma11}
\delta\phi_{n-r}=\sum_{q=n-2r-1}^{n-r-1}c_{q}B'_{2n-2r-1,q}+\sum_{q=n-2r}^{n-r-1}d_{q}\Gamma'_{2n-2r-1,q}
\end{equation} where the coefficients $c_{q}$ and $d_{q}$ can be expressed in terms of a linear combination with known coefficients of the variables $C_{q}$ and $D$, that still remain unknown.

For a compact domain $\Omega$ with smooth boundary, and a smooth vector field $X$, Proposition \ref{variacioMesura} gives \begin{equation}\label{variacio1}\delta_{X}\phi_{n-r}(\Omega)\!=\!\int_{\partial\Omega}\langle X, \mathbf{n}\rangle\!\int_{G_{n-1,r}^{\C}}\!\!\!\det(\mbox{II}|_{V})dVdx.\end{equation}
From Lemma \ref{entries} when pulling-back the form $\gamma_{k,q}$ from $N(\Omega)$ to $\partial \Omega$, one gets a polynomial expression $P_{k,q}$ of degree $2n-k-1$ in the coefficients $h_{ij}$  of $\mbox{II}$ with $i,j\in\{\overline{1},2,\overline{2},\dots,n,\overline{n}\}$. Moreover, for each $q$ the monomials in $P_{k,q}$ containing only entries of the form $h_{ii}$ contain the factor $h_{\overline{1}\overline{1}}=\mbox{II}(J\mathbf{n},J\mathbf{n})$ and do not appear in any other $P_{k,q'}$ with $q'\neq q$. Therefore, every non-trivial linear combination of $\{P_{k,q}\}_q$ must contain the variable  $h_{\overline{11}}$. On the other hand, the integral $\int_{G_{n-1,r}^{\C}}\det(\mbox{II}|_{V})dV$ is a polynomial of the second fundamental form $\mbox{II}$ restricted to the distribution $\D=\mathrm{span}\{ \mathbf{n},J\mathbf{n}\}^{\bot}$, hence a polynomial not involving $h_{\overline{1}\overline{1}}$. Comparing the expressions of (\ref{suma11}) and (\ref{variacio1}), it follows that $d_{q}=0$ for all $q\in\{n-2r,\dots,n-r-1\}$.

As $c_{q}$ and $d_{q}$ depend on $C_{q}$ and $D$, we will obtain the value of $c_{q}$ once we know the value of $C_{q}$ and $D$. We will get their value from the equalities $\{d_{q}=0\}$. Note that this gives $r$ equations, since $q$ runs from $n-2r$ to $n-r-1$ in \eqref{suma11}. As for the unknowns, we need to find $r$ constants $C_q$ plus the constant $D$ in \eqref{igual1}.

We will get an extra equation by taking $\mbox{II}|_{\D}=\Id$ and equating (\ref{variacio1}) to (\ref{suma11}). Then, for any pair $(n,r)$ we have a compatible linear system since constants in (\ref{igual1}) exist. Next we find the solution, and we show it is unique. 
 
\medskip
Let us relate explicitly the coefficients $\{c_{q}\}$ and $\{d_{q}\}$ in \eqref{suma11} with $C_{q}$ and $D$ in \eqref{igual1}. To simplify the range of the subscripts, we denote $d_{n-r-a}$ with $a=1,\dots,r$ and $c_{n-r-a}$ with $a=1,\dots,r+1$.

\vspace{0.2cm}
\emph{Coefficient $d_{n-r-1}$.} From the variation of $\mu'_{k,q}$ in $\C^n$ (Proposition \ref{variacions}), the coefficient of $\Gamma'_{2n-2r-1,n-r-1}$ comes from the variation of $\mu'_{2n-2r,n-r-1}$ and $\mu'_{2n-2r,n-r}$.  
Then, \begin{align}\label{d1}d_{n-r-1}=4C_{n-r-1}-2r(n-r)D.\end{align}

\emph{Coefficient $d_{n-r-a}$, $a=2,\dots,r$.} The coefficient of $\Gamma'_{2n-2r-1,n-r-a}$ comes from the variation of $\mu'_{2n-2r,n-r-a}$ and $\mu'_{2n-2r,n-r-a+1}$. Then,
\begin{align}\label{da}d_{n-r-a}=4a^2C_{n-r-a}-(r-a+1)(n-r-a+1)C_{n-r-a+1}.
\end{align}

\emph{Coefficient $c_{n-r-1}$.} The coefficient of $ B'_{2n-2r-1,n-r-1}$ comes from the variation of $\mu'_{2n-2r,n-r-1}$ and $\mu'_{2n-2r,n-r}$. Then,
\begin{equation}\label{c1}c_{n-r-1}=2(2r+1)(n-r)D-4C_{n-r-1}.\end{equation}

\emph{Coefficient $c_{n-r-a}$, $a=2,\dots,r-2$.} The coefficient of $ B'_{2n-2r-1,n-r-a}$ comes from the variation of $\mu'_{2n-2r,n-r-a}$ and $\mu'_{2n-2r,n-r-a+1}$. Then,
\begin{align}\label{ca}c_{n-r-a}&=-4a(2a-1)C_{n-r-a}+(2r-2a+3)(n-r-a+1)C_{n-r-a+1}.\end{align}

\emph{Coefficient $c_{n-2r-1}$.} The coefficient of $B'_{2n-2r-1,n-2r-1}$ comes from the variation of $\mu'_{2n-2r,n-2r}$. Then,
\begin{align}\label{cr}c_{n-2r-1}=(n-2r)C_{n-2r}.\end{align}

Now, we solve the linear system given by $\{d_{n-r-a}=0\}$ where $a\in\{1,\dots,r\}$. From equations (\ref{d1}) and (\ref{da}) the system is given by: $$\left\{\begin{array}{rcl}r(n-r)D&=&2C_{n-r-1}\\4a^2C_{n-r-a}&=&(n-r-a+1)(r-a+1)C_{n-r-a+1}.\end{array}\right.$$
Thus, \begin{align}\label{cd}C_{n-r-a}&=\frac{(n-r-a+1)\cdot\dots\cdot(n-r)(r-a+1)\dots\cdot r}{2\cdot 4^{a-1}a^2(a-1)^{2}\cdot\dots\cdot 1^2}D\nonumber\\&=\frac{(n-r)!r!}{2^{2a-1}(n-r-a)!(r-a)!a!a!}D\nonumber\\&=\frac{D}{2^{2a-1}}\binom{n-r}{a}\binom{r}{a}.\end{align}

To obtain the value of $D$, we calculate $\int_{G_{n-1,r}^{\C}}\det(\mbox{II}|_{V})_{p}dV$ and $\beta_{2n-2r-1,n-r-a}$ in case $\mbox{II}|_{\mathcal D}(p)=\lambda Id$ for $\lambda>0$, which occurs when $\Omega$ is a metric ball. On the one hand, we have
$$\int_{G_{n-1,r}^{\C}}\det(\lambda\Id|_{V})_{p}dV=\lambda^{2r}\vol(G_{n-1,r}^{\C}).$$
On the other hand, if $\mbox{II}|_{\D}=\lambda\Id$, then the connection forms satisfy $\alpha_{1i}=\lambda\omega_{i}$ and $\beta_{1i}=\lambda\omega_{i}$. Thus, $\theta_{1}=2\lambda\theta_{2}$ and $\theta_{0}=\lambda^2\theta_{2}$ and we obtain 
\begin{align*}c_{n,2n-2r-1,n-r-a}^{-1}\beta_{2n-2r-1,n-r-a}(p)&=\lambda^{2r}(\beta\wedge\theta_{0}^{r-a+1}\wedge\theta_{1}^{2a-2}\wedge\theta_{2}^{n-r-a})(p)\\&=2^{2a-2}\lambda^{2r}(\beta\wedge\theta_{2}^{n-1})(p)=2^{2a-2}\lambda^{2r}(n-1)!.\end{align*}
So, the equation $$\vol(G_{n-1,r}^{\C})=\sum_{a=1}^{r+1}c_{n-r-a}2^{2a-2}(n-1)!$$ must be satisfied.

Substituting equations (\ref{c1}), (\ref{ca}) and (\ref{cr}) in the last equation gives {\small \begin{align*} &\frac{\vol(G_{n-1,r}^{\C})}{(n-1)!}=(2(2r+1)(n-r)D-4C_{n-r-1})\\&\quad+\sum_{a=2}^{r}2^{2a-2}((2r-2a+3)(n-r+a+1)C_{n-r-a+1}-4a(2a-1)C_{n-r-a})\\&\quad+2^{2r}(n-2r)C_{n-2r}\\&=2(2r+1)(n-r)D+4C_{n-r-1}((2r-1)(n-r-1)-1)\\&\quad+\sum_{a=2}^{r-1}(-2^{2a-2}4a(2a-1)+2^{2a}(2r-2a+1)(n-r-a))C_{n-r-a}\\&\quad+C_{n-2r}(2^{2r}(n-2r)-2^{2r-2}4r(2r-1))\\&=2(2r+1)(n-r)D+\sum_{a=1}^{r}2^{2a}C_{n-r-a}((2r-2a+1)(n-r-a)-a(2a-1))
\\&\stackrel{(\ref{cd})}{=}\!2D\sum_{a=0}^{r}{n-r\choose a}{r\choose a}\left({(2r-2a+1)(n-r-a)-a(2a-1)}\right)
\\&=D\frac{2\,n!}{r!(n-r-1)!}.\end{align*}}
The last equality will be proved in Lemma \ref{sumLem1}.
Thus,  \begin{align*}D&=\frac{\vol(G_{n-1,r}^{\C})}{2\,n!}\binom{n-1}{r}^{-1},
\\C_{n-r-a}&=\frac{\vol(G_{n-1,r}^{\C})}{4^{a}n!}\binom{n-1}{r}^{-1}\binom{n-r}{a}\binom{r}{a}\end{align*}
and, for $2r< n$, we have 
{\small \begin{align*}\phi_{n-r}&=\sum_{a=1}^{r}C_{n-r-a}B'_{2n-2r,n-r-a}+D\Gamma'_{2n-2r,n-r}\\&=\frac{\vol(G_{n-1,r}^{\C})}{2\,n!}\binom{n-1}{r}^{-1}\!\!\!\left(\sum_{a=1}^{r}\binom{n-r}{a}\!\!\binom{r}{a}2^{-2a+1}B'_{2n-2r,n-r-a}\!\!+\Gamma'_{2n-2r,n-r}\right)\end{align*}}
and
{\small \begin{align*}\delta&_{X}\phi_{n-r}(\Omega)=(2(2r+1)(n-r)D-4C_{n-r-1}){B}'_{2n-2r-1,n-r-1}\\&\!\!+\!\!\sum_{a=2}^{r}((2r-2a+3)(n-r+a+1)C_{n-r-a+1}\!-\!4a(2a-1)C_{n-r-a})B'_{2n-2r-1,n-r-a}\\&\!\!+\!(n-2r)C_{n-2r}B'_{2n-2r-1,n-2r-1}\\&=\frac{\vol(G_{n-1,r}^{\C})}{n!}\binom{n-1}{r}^{-1}\left(\sum_{a=1}^{r+1}\binom{n-r}{a}\binom{r+1}{a}\frac{a}{4^{a-1}}B'_{2n-2r-1,n-r-a}\right).\end{align*}}

Recalling (\ref{canvip}) and (\ref{mus}) gives the result. 
\end{proof}

The following lemma completes the proof of Theorem \ref{variaciorplans}.
\begin{lemma}\label{sumLem1}For $n, r\in\N$ with $2r\leq n$, the following equality holds
$$\sum_{a=0}^{r}{n-r\choose a}{r\choose a}({(2r-2a+1)(n-r-a)-a(2a-1)})=\frac{\,n!}{r!(n-r-1)!}.$$
\end{lemma}
\begin{proof}
Define the function $F(r,a)$ as $$F(r,a)={n-r\choose a}{r\choose a}({(2r-2a+1)(n-r-a)-a(2a-1)})$$
for $a\leq r \leq n-a$ and $0$ otherwise. We must compute  $$f(r)=\sum_{a=0}^{\infty}F(r,a).$$
The following relation is straightforward to check
\begin{equation}\label{sum1}-(n-r-1)F(r,a) + (r+1)F(r+1,a) = G(r,a+1) - G(r,a),\end{equation}
where \begin{multline*}
       G(r,a)=\binom{r}{a-1}\binom{n-r-1}{a-1}(-2a((2n+1)r-n^2+n+1)\\
-4r^3+(6n-10)r^2+(-2n^2+13n-7)r-3n^2+6n-1)
\end{multline*}
if $a\leq r\leq n-a$, and $G(r,a)=0$ otherwise. 
By summing equation \eqref{sum1} from $a=0$ to $\infty$, we get the following recurrence equation for $f(r)$
$$(n-r-1)f(r)=(r+1)f(r+1),$$and the result follows.
\end{proof} 
The previous proof follows the so-called Zeilberger's algorithm (\cite{zeilberger}). In particular, the key equality \eqref{sum1} was obtained by using the software EKHAD available with the book \cite{a=b}.

\begin{remark}\label{secondProof}An alternative way to prove equations \eqref{standard1}, and \eqref{cn} is as follows. Formula (\ref{cn}) can be obtained from equations (36), (37) in \cite{bernig.fu}, and Definition 3.6 there. Indeed, our valuation $\phi_{n-r}$ coincides up to normalization with the valuation $s^r$ of \cite{bernig.fu}. Again,  one needs to apply the Zeilberger algorithm in order to simplify some coefficients.
Then one gets equation \eqref{standard1} from \eqref{cn} essentially by the same computations as in the previous proof (equations \eqref{d1} to \eqref{cr}).
\end{remark}

\subsection{In Hermitian space forms}
\begin{corollary}\label{variacioepsilon}Let $\Omega\in\mathcal R(\Mn)$ be a compact domain with $C^{1,1}$ almost everywhere smooth boundary. Let $X$ be a smooth vector field over $\Mn$. Then,
\begin{align}\label{eqvariacioepsilon}\delta_{X}\phi_{n-r}(\Omega)=\frac{\vol(G_{n-1,r}^{\C})\omega_{2r+1}(r+1)}{\binom{n-1}{r}\binom{n}{r}}\int_{\partial\Omega}\langle X,\mathbf{n}\rangle \varphi^{*}(\omega)
\end{align}
where
$$\omega=\sum_{q=\max\{0,n-2r-1\}}^{n-r-1}\binom{2n-2r-2q-1}{n-r-q}\frac{1}{4^{n-r-q-1}}\beta_{2n-2r-1,q}.$$
\end{corollary}

\begin{proof}
Let us begin with the case $\epsilon=0$. Comparing equation (\ref{standard1}) and Proposition \ref{variacioMesura} shows that
$$\int_{\partial\Omega} \langle X,\mathbf{n}\rangle \left( \int_{G_{n-1,r}^{\C}}\det(\mbox{II}|_{V}) dV\right)dx$$
equals the right hand side of equation \eqref{eqvariacioepsilon}. By taking a field $X$ that vanishes outside an arbitrarily small neighborhood
of a fixed $x\in\partial \Omega$, we deduce the following equality between
forms
\begin{align}\label{promig}&\left(\int_{G_{n-1,r}^{\C}}\det(\mbox{II}|_{V})dV\right)dx=\frac{\omega_{2r+1}}{{{n-1}\choose r}{n \choose r}}\vol(G_{n-1,r}^{\C})(r+1)\varphi^{*}(\omega).
\end{align}
This equation extends obviously to $\Mn$ without change.
Then, using Proposition \ref{variacioMesura} gives the result.
\end{proof}

\begin{remark}\label{solucio}
 As a by-product we have the following consequence of \eqref{promig}. Let $\mbox{II}$ be a real symmetric bilinear form in $\C^n$. Consider
\[
 Q_r(\mbox{II}):=\int_{G_{n,r}^\C}\det(\mbox{II}|_V)dV.
\]
Let $\sigma:\C^n\rightarrow \C^n$ be the endomorphism associated to $\mbox{II}$ through the scalar product of $\C^n$ by $\langle\sigma(u),v\rangle=-\mbox{II}(u,v)$. Let $\overline\sigma:\C^n\rightarrow\C^n\times\C^n$ be the graph map $\overline\sigma(u)=(u,\sigma(u))$.\ Then
\[
 Q_r(\mbox{II})\ d\mbox{vol}=\sum_{q=\max\{0,n-2r\}}^{n-r}\!\!\alpha_{n,q,r}\overline{\sigma}^*(\theta_0^{r-q+1}\wedge\theta_1^{2q-2}\wedge\theta_2^{n+1-r-q})
\]
where $d\mbox{vol}$ is the canonical volume form in $\C^n$, and $\theta_0,\theta_1,\theta_2$ are $2$-forms in $\C^n\times\C^n$ defined by \eqref{defRefMob}, and
\[
\alpha_{n,q,r}= \binom{2n-2r-2q+1}{n+1-r-q}
\frac{c_{n+1,2n-2r+1,n+1-r-q}}{4^{n-r-q}}.
\]
\end{remark}

\begin{theorem}\label{mesuresCKn}In $\Mn$, the following equation between valuations holds
\begin{align}\notag\phi_{n-r}=\frac{\vol(G_{n-1,r}^{\C})}{\binom{n-1}{r}}&\left(\sum_{j=n-r}^{n}\epsilon^{j-(n-r)}\frac{\omega_{2n-2j}}{\binom{n}{j}}\cdot\huge((j+r-n+1)\mu_{2j,j}+\right.\\&\qquad+\left.\sum_{q=\max\{0,2j-n\}}^{j-1}\frac{1}{4^{j-q}}\binom{2j-2q}{j-q}\mu_{2j,q}\huge)\right).\label{crofton}
\end{align} 
\end{theorem}
\begin{proof}

We first focus on the right hand side of (\ref{crofton}), which we rewrite as
\[
\mathcal C_r:=\frac{\mathrm{vol}(G_{n-1,r}^{\mathbb C})}{n!} {n-1 \choose r}^{-1}\{\epsilon^r (r+1)n!\vol 
\]\[+\sum_{j=n-r}^{n-1}\epsilon^{j-n+r}\left(\frac{j-n+r+1}{2}\mu_{2j,j}'+\sum_{q=\max\{0,2j-n\}}^{j-1}\frac{1}{4^{j-q}}{n-j \choose j-q}{j \choose q} \mu_{2j,q}'\right)\}.
\]
By Proposition \ref{variacions}
{\small \begin{equation}\label{qual}
\delta_X\mathcal C_r=\frac{\mathrm{vol}(G_{n-1,r}^{\mathbb C})}{n!} {n-1 \choose r}^{-1}[\epsilon^r n (r+1) B_{2n-1,n-1}'
\end{equation}
\[
+\sum_{j=n-r}^{n-1}\epsilon^{j-n+r}\frac{j-n+r+1}{2}\{-2(n-j)j\Gamma_{2j-1,j-1}'+2\epsilon(n-j-1)(j+1)\Gamma_{2j+1,j}'
\]\[+4(n-j+\frac{1}{2})j B_{2j-1,j-1}'
+4\epsilon\left(\frac{j+1}{2}-(n-j)(2j+\frac{3}{2})\right) B_{2j+1,j}'+4\epsilon^2(n-j-1)(j+\frac{3}{2}) B_{2j+3,j+1}'\}]
\]

\[
+\sum_{j=n-r}^{n-1}\sum_{q=\max\{0,2j-n\}}^{j-1}\frac{\epsilon^{j-n+r}}{4^{j-q}}{n-j\choose j-q}{j \choose q}\{(2j-2q)^2\Gamma_{2j-1,q}'
\]
\[-(n+q-2j) q \Gamma_{2j-1,q-1}'+2(n+q-2j+\frac{1}{2})q B_{2j-1,q-1}'-2(2j-2q)(2j-2q-1) B_{2j-1,q}'
\]
\[
+2\epsilon(2j-2q)(2j-2q-1)  B_{2j+1,q+1}'-2\epsilon (n-2j+q)(q+\frac{1}{2}) B_{2j+1,q}'\}.\]}
Next we show that the previous expression is independent of $\epsilon$; i.e. all the terms containing $\epsilon$ cancel out. 
We concentrate first on the terms with $B'_{k,q}$. By putting together similar terms, the forth and fifth lines of (\ref{qual}) are
\begin{equation*}
\sum_{h=n-r+1}^{n-1}\epsilon^{h-n+r}2\{(h-n+r+1)(n-h+\frac{1}{2})h+(h-n+r)(\frac{h}{2}-(n-h+1)(2h-\frac{1}{2}))\end{equation*}
\begin{equation}\label{tal}+(h-n+r+1)(n-h+1)(h-\frac{1}{2})\} B_{2h-1,h-1}'\end{equation}\[-\epsilon^r \{(r+2)n-1\} B_{2n-1,n-1}'+(2r+1)(n-r) B_{2n-2r-1,n-r-1}'.\]

By putting together similar terms,  the double sum in (\ref{qual}) (forgetting for the moment the terms with $ \Gamma'_{k,q}$) becomes
{\footnotesize\[
\sum_{h=n-r}^{n-1}\sum_{a=\max\{-1,2h-n-1\}}^{h-2}\frac{\epsilon^{h-n+r}}{4^{h-a-1}} {n-h\choose h-a-1}{h \choose a+1}2(n+a-2h+\frac{3}{2})(a+1) B_{2h-1,a}'
\]
\[
-\sum_{h=n-r}^{n-1}\sum_{a=\max\{0,2h-n\}}^{h-1}\frac{\epsilon^{h-n+r}}{4^{h-a}} {n-h\choose h-a}{h \choose a}2(2h-2a)(2h-2a-1) B_{2h-1,a}'
\]\[
+\sum_{h=n-r+1}^{n}\sum_{a=\max\{1,2h-n-1\}}^{h-1}\frac{\epsilon^{h-n+r}}{4^{h-a}} {n-h+1\choose h-a}{h-1 \choose a-1}2(2h-2a)(2h-2a-1)  B_{2h-1,a}'
\]
\[-\sum_{h=n-r+1}^{n}\sum_{a=\max\{0,2h-n-2\}}^{h-2}\frac{\epsilon^{h-n+r}}{4^{h-a-1}} {n-h+1\choose h-a-1}{h-1 \choose a}2 (n-2h+a+2)(a+\frac{1}{2}) B_{2h-1,a}'.\]}
Note that  the terms with $a=-1$ or $a=2h-n-2$ vanish, if they occur. Then, one checks that all the terms in the above expression cancel out except those with $h=n-r,n$, and those with $a=h-1$. Clearly the terms corresponding to $h=n-r$ are independent of $\epsilon$. The terms with $h=n$ sum up ${\epsilon^{r}} (n-1)  B_{2n-1,n-1}'$,
and together with the similar term appearing in (\ref{tal}) cancel out the first term in (\ref{qual}). Finally, the terms with $a=h-1$ are cancelled with the sum in (\ref{tal}).

With a similar but shorter analysis one checks that the multiples of $\Gamma_{k,q}'$ cancel out completely. This shows that (\ref{qual}) is independent of $\epsilon$. Hence, $\delta\mathcal C_r$ coincides with the right hand side of (\ref{standard1}) since we know this happens for $\epsilon=0$.

Let now  $\Omega\in\mathcal R(\Mn)$ be a compact domain with $C^{1,1}$ and almost everywhere smooth boundary $\partial \Omega$.  By the last paragraph and Corollary \ref{variacioepsilon} we have $\delta_X\phi_{n-r}(\Omega)=\delta_X\mathcal C_r(\Omega)$ for every smooth vector field $X$. This implies that $\phi_{n-r}(F_t(\Omega))-\mathcal C_r(F_t(\Omega))$ does not depend on $t$, being $F_t$ the flow of $X$.  Let us take $X$ such that its flow $F_t$ converges to a point $p$  as $t$ tends to infinity. Assume further that $X$ coincides with the radial vector field $\sum_i x^i{\partial\over\partial x^i}$ on a local chart around $p$.

Then, $\mathcal C_r(F_t(\Omega))$ tends to 0 as $t$ goes to infinity. Indeed, let $\tilde F_t$ be the flow of contactomorphisms of $S(\Mn)$ that cover $F_t$. Then $N(F_t(\Omega))=\tilde F_t(N(\Omega))$, so
\begin{equation}\label{convergencia}
 \mu_{2j,q}(F_t(\Omega))=\int_{N(F_t(\Omega))}\beta_{2j,q}=\int_{N(\Omega)}\tilde F_t^*\beta_{2j,q}\to 0
\end{equation}
for $j\neq 0,q$. Indeed, note that $F_t^*\beta$ converges to $0$, and so does its multiple $\widetilde F_t^*\beta_{k,q}$. For $q=j\neq 0$ one can argue similarly.

On the other hand, \eqref{formulot} shows that $\phi_{n-r}(F_t(\Omega))$ has vanishing limit when $t$ goes to $\infty$. Therefore $\phi_{n-r}(F_t(\Omega))$ and $\mathcal C_{r}(F_t(\Omega))$ coincide for every $t$.

It remains only to prove $\phi_{n-r}(\Omega)=\mathcal C_r(\Omega)$ for $\Omega\in\mathcal P(\Mn)$. To this end, let us consider the parallel sets $\Omega_t$ at distance $t\geq 0$. For small $t>0$, we have $\Omega_t\in \mathcal R(\Mn)$. Hence, $\phi_{n-r}(\Omega_t)=\mathcal C_r(\Omega_t)$ for $t>0$. We need to show that both $\phi_{n-r}(\Omega_t)$ and $\mathcal C_r(\Omega_t)$ are continuous functions on $t\geq 0$.

The continuity of  $\mathcal C_r(\Omega_t)$ follows easily from the fact that $N(\Omega_t)$ is the image of $N(\Omega)$ under the geodesic flow of $S(\Mn)$. As for $\phi_{n-r}(\Omega_t)$, the continuity follows from equation \eqref{formulot}.
\end{proof}
\begin{remark}
The coefficients of $\mu_{k,q}$ in (\ref{crofton}) were found by solving a linear system of equations. These equations were obtained by imposing that the variations of both sides in (\ref{crofton}) coincide.
\end{remark}

\section{The Gauss-Bonnet theorem}\label{five}
\begin{theorem}\label{gb}In $\Mn$, 
\begin{equation}\label{gbequation}\omega_{2n}\chi=\sum_{c=0}^{n}\epsilon^c\frac{\omega_{2n-2c}}{\binom{n}{c}}\!\!\left(\!\sum_{q=\max\{0,2c-n\}}^{c-1}\!\!\frac{1}{4^{c-q}}\binom{2c-2q}{c-q}\mu_{2c,q}\!+\!(c+1)\mu_{2c,c}\!\right).\end{equation} 
\end{theorem}
\begin{proof} For $\epsilon=0$ equation (\ref{gbequation}) is the well known Gauss-Bonnet formula in $\mathbb C^n\equiv\mathbb R^{2n}$. For  general $\epsilon$ we proceed analogously to the proof of Theorem \ref{mesuresCKn}. In fact, the same computations of the previous proof show (in case $r=n$) that the right hand side of (\ref{gbequation}) has null variation. 

Hence, both sides are constant along flows. As in the previous proof, we take a  flow $F_t$ converging to a point $p$, and given by the radial vector field near $p$. In particular, $F_t(\Omega)$ converges to  $\{p\}$ for every $\Omega\in\mathcal P(\Mn)$ (or $\mathcal R(\Mn)$). As in \eqref{convergencia}, $\mu_{2c,q}(F_t(\Omega))$ goes to  $0$ for $c\neq 0$. The remaining term $\mu_{0,0}(F_t(\Omega))$ tends to $\chi(\Omega)$. Indeed, for $t$ big enough, $F_t(\Omega)$ is contained in a ball around $p$ of arbitrarily small radius. The metric inside this ball is close to euclidean. Hence, the total curvature $\mu_{0,0}(F_t(\Omega))$ converges to the euclidean total curvature, which equals $\chi(\Omega)$ by the Gauss-Bonnet theorem.
\end{proof}

\begin{remark}It is intriguing to notice that the coefficients inside the second sum in formula (\ref{gbequation}) coincide with those of the Maclaurin expansion of $1/\sqrt{1-x}$. We do not have any interpretation of this fact.
\end{remark}

\begin{remark}
For $n=2$ and $n=3$, the Gauss-Bonnet formula in $\Mn$ given in Theorem \ref{gb} was already proved in \cite{park}.
\end{remark}

\begin{theorem}In $\Mn$,
\begin{equation}\label{simple}\omega_{2n}\chi=\,\epsilon\phi_{1}+\sum_{k=0}^{n}\epsilon^k \omega_{2n-2k}\binom{n}{k}^{-1}\mu_{2k,k}.\end{equation}
\end{theorem}
\begin{proof}

From Theorems \ref{mesuresCKn} and \ref{gb}, it follows that 
\small{\begin{align*}\chi&=\sum_{c=0}^{n-1}\frac{\epsilon^c\,c!}{\pi^c}\left(\sum_{q=\max\{0,2c-n\}}^{c-1}\frac{1}{4^{c-q}}\binom{2c-2q}{c-q}\mu_{2c,q}+(c+1)\mu_{2c,c}\right)
\\&=\frac{\epsilon\,n!}{\pi^n}\sum_{c=1}^{n}\frac{\epsilon^{c-1}\,c!\pi^{n-c}}{n!}\!\!\left(\sum_{q=\max\{0,2c-n\}}^{c-1}\!\!\!\!\frac{1}{4^{c-q}}\binom{2c-2q}{c-q}\mu_{2c,q}+c\mu_{2c,c}\right)\\&\quad+\frac{\epsilon\,n!}{\pi^n}\sum_{c=1}^{n}\frac{\epsilon^{c-1}c!\pi^{n-c}}{n!}\mu_{2c,c}
\\&=\frac{\epsilon\,n!}{\pi^n}\phi_{1}+\sum_{c=1}^{n}\frac{\epsilon^c\,c!}{\pi^c}\mu_{2c,c}.
\end{align*}} 
\end{proof}

\begin{remark}
Recall that the first term in the sum of \eqref{simple} corresponds to the Gauss curvature integral $\mu_{0}$.
\end{remark}

\begin{remark}Let $\Omega\subset\mathcal{P}(\C^n)$. From Theorem \ref{variaciorplans} we get an expression for $$\int_{\L_{r}^{\C}}\mu_{0,0}(\Omega\cap L_{r})dL_{r}$$ in terms of the hermitian intrinsic volumes. This shows that $\mu_{0,0}$ does not have the so-called reproductive property. A similar fact happens with $\mu_{2n-2,n-1}$ and $\mu_{2n-2,n-2}$  (cf. \cite{abardia.m1}).
\end{remark}

\def\cprime{$'$}

\end{document}